  \providecommand\BibTeX{{%
    \normalfont B\kern-0.5em{\scshape i\kern-0.25em b}\kern-0.8em\TeX}}}
\newtheorem{theorem}{Theorem}[section]
\newtheorem{lemma}[theorem]{Lemma}
\newtheorem{proposition}[theorem]{Proposition}
\newtheorem{corollary}[theorem]{Corollary}
\theoremstyle{definition}
\newtheorem{definition}[theorem]{Definition}
\newtheorem{remark}[theorem]{Remark}
\newcommand{\mbk}{\mathbb K}
\newcommand{\vp}{\Phi}
\newcommand{\del}{\partial}
\newcommand{\stl}{\left\{}
\newcommand{\str}{\right\}}
\newcommand{\A}{\mathcal{A}}
\newcommand{\om}{\omega}
\newcommand{\bom}{\bar{\omega}}
\newcommand{\e}{\eta}
\newcommand{\be}{\bar{\eta}}
\newcommand{\m}{\mathbf{m}}
\newcommand{\Q}{\mathcal{Q}}
\newcommand{\ap}{R}
\begin{document}

\title{The apolar algebra of a product of linear forms}
\author[M. DiPasquale, Z. Flores, C. Peterson]{Michael DiPasquale, Zachary Flores, Chris Peterson}

\address{Colorado State University\\
	Department of mathematics, Fort Collins, CO 80523}
\email{michael.dipasquale@colostate.edu}

%\address{Colorado State University\\
%	Department of mathematics, Fort Collins, CO 80523 USA}
\email{flores@math.colostate.edu}

%\address{Colorado State University\\
%	Department of mathematics, Fort Collins, CO 80523 USA}
\email{peterson@math.colostate.edu}

\begin{abstract}
Apolarity is an important tool in commutative algebra and algebraic geometry which studies a form, $f$, by the action of polynomial differential operators on $f$.  The quotient of all polynomial differential operators by those which annihilate $f$ is called the \textit{apolar algebra} of $f$.  In general, the apolar algebra of a form is useful for determining its Waring rank, which can be seen as the problem of decomposing the supersymmetric tensor, associated to the form, minimally as a sum of rank one supersymmetric tensors. In this article we study the apolar algebra of a product of linear forms, which generalizes the case of monomials and connects to the geometry of hyperplane arrangements.  In the first part of the article we provide a bound on the Waring rank of a product of linear forms under certain genericity assumptions; for this we use the defining equations of so-called star configurations due to Geramita, Harbourne, and Migliore.  In the second part of the article we use the computer algebra system \textsc{Bertini}, which operates by homotopy continuation methods, to solve certain rank equations for catalecticant matrices.  Our computations suggest that, up to a change of variables, there are exactly six homogeneous polynomials of degree six in three variables which factor completely as a product of linear forms defining an irreducible multi-arrangement and whose apolar algebras have dimension six in degree three.  As a consequence of these calculations, we  find six cases of such forms with cactus rank six, five of which also have Waring rank six.  Among these are products defining subarrangements of the braid and Hessian arrangements.
\end{abstract}

\keywords{Apolar algebras, Waring rank, hyperplane arrangements, tensor decomposition, %homotopy continuation, 
	numerical algebraic geometry}

\subjclass{
%13P15, Solving polynomial systems; resultants
Primary:
13E10, %Artinian rings and modules, finite-dimensional algebras
13N10, %Rings of differential operators and their modules
68W30, %Symbolic computation and algebraic computation
14N20, %Configurations and arrangements of linear subspaces
Secondary:
15A69 %Multilinear algebra, tensor products
%51N35, Questions of classical algebraic geometry 
}

\thanks{
The third author was partially supported by NSF ATD-1712788 and NSF CCF-1830676.
}

\maketitle

\section{Introduction}
Given a homogeneous polynomial $f$ of degree $d$, the \textit{apolar algebra} $R_f$ is the ring of polynomial differential operators modulo those which annihilate $f$.  This algebra has been studied for a variety of reasons; in particular the apolar algebra of a form of degree $d$ is always an Artinian Gorenstein algebra with socle degree $d$ and every Artinian Gorenstein algebra with socle degree $d$ can be represented as the apolar algebra of a form of degree $d$.  This explicit correspondence, via the apolar algebra, between forms of degree $d$ and  Artinian Gorenstein algebras with socle degree $d$ is well exposited by Iarrabino and Kanev in~\cite{ps}.  The apolar algebra of a homogeneous polynomial $f$ of degree $d$ is also key to studying the \textit{Waring rank} of $f$ -- this is the smallest integer $r$ for which there exist linear forms $\ell_1,\ldots,\ell_r$ so that $f=\ell_1^d+\cdots+\ell_r^d$ (we call such a representation a \textit{Waring decomposition}).  The Waring rank often depends on the field chosen -- in this note we will work over an algebraically closed field.  Note that homogeneous polynomials of degree $d$ correspond to supersymmetric $d$-dimensional tensors and that the $d^{th}$ power of a linear form corresponds to a rank 1 supersymmetric $d$-dimensional tensor. Through this correspondence, Waring rank connects to tensor rank and Waring decomposition to tensor decomposition.

In this note we study the apolar algebra of a form $f$ of degree $d$ which can be written as a product of $d$, not necessarily distinct, linear forms.  Such forms correspond geometrically to hyperplane arrangements (in the case of distinct  linear forms) and hyperplane multi-arrangements (in the case of non-distinct  linear forms).  To simplify exposition, we conflate a multi-arrangement with its defining equation.  For instance, if we refer to the Waring rank of a multi-arrangement, we mean the Waring rank of its defining equation.  Our inspiration for studying this problem stems largely from the thesis of Max Wakefield~\cite{W06}, where several questions are posed about apolar algebras of multi-arrangements.  In particular, we study when the apolar algebra of a multi-arrangement is a complete intersection.

If the apolar algebra of a form is a complete intersection, it is often easier to compute its Waring rank.  Two important classes of examples (all multi-arrangements) serve to illustrate this point.  The first is the case of a monomial, whose apolar algebra is generated by powers of variables.  The Waring rank of monomials over the field of complex numbers is completely determined in~\cite{carlini2012solution}.  The second class is when $f$ is the fundamental skew invariant of a complex reflection group $W$, which is the product of the linear forms defining the pseudo-reflections of $W$.  In this case the apolar algebra $R_f$ is isomorphic to the ring of covariants of $W$~\cite[Chapter~26]{K01}, which is the quotient of the polynomial ring by the ideal generated by invariants of $W$.  This is a complete intersection since the ring of invariants is itself a polynomial ring by the celebrated Chevalley-Shephard-Todd theorem.  In~\cite{TW15}, Teitler and Woo determine the Waring rank of (and a Waring decomposition of) the fundamental skew invariant of a complex reflection arrangement under some mild conditions.

Following a section providing preliminary background material, we briefly discuss \textit{reducible} arrangements, which are arrangements that can be written as a product of lower dimensional arrangements.  In Section 4 we make use of the defining equations of \textit{star configurations} determined by Geramita, Harbourne, and Migliore~\cite{GHG13} to give a lower bound on the initial degree of the apolar algebra of a generic arrangement (Proposition~\ref{prop:genericlowdeg}).  We give two corollaries to Proposition~\ref{prop:genericlowdeg} -- the first is a lower bound on the size of a generic arrangement whose apolar ideal is a complete intersection and the second is a lower bound on the Waring rank of a generic arrangement.  Section 5 gives a case study of six lines in $\mathbb P^2$. In particular, we use the numerical computer  algebra system \textsc{Bertini}~\cite{BHSW06} to compute what we suspect is a complete list of irreducible multi-arrangements consisting of six lines (counting multiplicity) and annihilated by at least three cubics.  We record this list in Theorem*~\ref{thm:sixlines} (the star indicates this is a computational result that needs further verification).  This leads to what we expect is a complete list of irreducible multi-arrangements consisting of six lines that have cactus rank equal to six (all but one of these also have Waring rank equal to six).  \textsc{Macaulay2}~\cite{m2},~\textsc{Sage}~\cite{sagemath}, and \textsc{Bertini} scripts we used to find this list and check the resulting Waring ranks can be found under the Research tab at \url{https://midipasq.github.io/}.  The final section of the paper provides closing comments and gives suggestions  for  further research.

\section{Preliminaries}
Let $\mbk$ be an algebraically closed field of characteristic zero and put $R = \mbk[X_0,\ldots, X_n]$.
% = \bds_{j\geq 0} R_j$ and $\mcd = \bds_{j\geq 0} \mcd_j  =\bds_{j\geq 0} \hm_\mbk(R_j, \mbk)$ the graded $\mbk$-dual of $R$.  $\mcd$ does have the structure of a ring -- it is called a \textit{ring of divided powers}.  We will not concern ourselves with the structure of this ring.  What is more important is that the ring $R$ acts on $\mcd$ by contraction.  That is, for $f\in R_e$ and $\vp\in\mcd_d$, we have 
%\begin{displaymath}
%f\circ\vp = \left\{
%\begin{array}{lr}
%0 & e > d \\
%\vp_f & e\leq d
%\end{array}
%\right.
%\end{displaymath}
%Where $\vp_f\in \mcd_{d-e}$ is such that for $g\in \mcd_{d-e}$, we have $\vp_f(g) = \vp(fg)$.  Now suppose the characteristic of $\mbk$ is zero and 
Let $S=\mbk[x_0,\ldots, x_n]$ be the $R$-module defined by $R$ acting on $S$ via partial differentiation.  That is, if $f\in S$ and $\vp\in R$,
\[
\vp\circ f = \vp\left(\frac{\del}{\del x_0},\ldots, \frac{\del}{\del x_n}\right)f.
\]
This is known as the \textit{apolar} action of $R$ on $S$.  The expository article of Geramita~\cite{G95} is an excellent introduction to applications of apolarity, the book of Iarrabino and Kanev~\cite{ps} can be used to go into more detail, and the article of De Paris~\cite{de2018seeking} gives a recent summary of apolarity and tensor rank.

%In characteristic zero, $\mcd$ is linearly isomorphic to $S$ as $R$-modules, as explained in the excellent expository article.  More is shown in~\cite{G95} -- the isomorphism is not merely an isomorphism as $R$-modules but also as $\mbk$-algebras.  We will tend to consider the action of $R$ on $S$ by partial differentiation, which allows us to view multiplication in $S$ as we are used to.

Given a form $f\in S$, the \textit{apolar ideal} of $f$ is
\[
\text{Ann}_R(f) = \stl \vp\in R :\vp\circ f = 0 \str.
\]
We write $\ap_f=R/\text{Ann}_R(f)$; this is the \textit{apolar algebra} of $f$.  The apolar algebra $\ap_f$ is a graded Artinian Gorenstein algebra, and every graded Artinian Gorenstein algebra arises in this way~\cite[Lemma~2.12]{ps}.

Now suppose $f\in S_d$ (where $S_d$ denotes the degree $d$ forms in $S$).  A \textit{Waring decomposition} of $f$ is a decomposition $f=c_1\ell_1^d+\cdots+c_k\ell_k^d$, where $\ell_1,\ldots,\ell_k$ are linear forms and $c_1,\ldots,c_k\in\mbk$ (we do not strictly need $c_1,\ldots,c_k$ since $\mbk$ is algebraically closed, but it will be useful for us to consider these).  The smallest number of linear forms needed in a Waring decomposition of $f$ is the \textit{Waring rank} of $f$.  The following lemma relates the apolarity action and Waring decompositions.  See~\cite[Lemma~1.15]{ps} for a proof.  In what follows, we say a linear form $\ell=\sum_{i=0}^n a_ix_i\in\mbk[x_0,\ldots,x_n]$ is \textit{dual} to the point $P=[a_0: \dots : a_n]\in\mathbb{P}_\mbk^n$. Any non-zero constant multiple of $\ell$ is of course dual to the same point $P$.

\begin{lemma}[Apolarity Lemma]\label{lem:Apolarity}
	Let $f\in S=\mbk[x_0,\ldots,x_n]$ be a form of degree $d$, $X=\{P_1,\ldots,P_k\}\subset \mathbb{P}_\mbk^n$ a set of points, and $I_X\subset R$ its corresponding ideal.  Write $\ell_1,\ldots,\ell_k$ for linear forms in $S$ dual to the points $P_1,\ldots,P_k$.  Then $f=c_1\ell_1^d+\ldots+c_k\ell_k^d$ for some constants $c_1,\ldots,c_k$ if and only if $I_X\subset \text{Ann}_R(f)$.
\end{lemma}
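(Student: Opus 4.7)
The plan is to reduce the entire statement to the single computational identity
\[
\vp \circ \ell^d = \frac{d!}{(d-e)!}\,\vp(a_0,\ldots,a_n)\,\ell^{d-e}
\]
for a linear form $\ell = \sum_{i=0}^n a_i x_i \in S_1$ dual to $P=[a_0:\cdots:a_n]$ and any $\vp \in R_e$ with $e \le d$. First I would verify this identity, by induction on $e$, starting from the base case $\vp = X_j$ where $X_j \circ \ell^d = d\,a_j\,\ell^{d-1}$ and iterating; the factor $\frac{d!}{(d-e)!}$ arises naturally from repeated differentiation. Once established, the identity translates the apolar action on a $d$th power of a linear form into evaluation of $\vp$ at the dual point $P$.

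With this in hand, the ($\Rightarrow$) direction is straightforward. Let $\vp \in I_X$. If $\deg \vp > d$ then $\vp \circ \ell_i^d = 0$ automatically for each $i$, so $\vp \circ f = 0$. If $\deg \vp = e \le d$, the identity yields $\vp \circ \ell_i^d = \frac{d!}{(d-e)!}\vp(P_i)\,\ell_i^{d-e} = 0$ since $\vp$ vanishes on each $P_i$, and by linearity $\vp \circ f = \sum_i c_i (\vp \circ \ell_i^d) = 0$.

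For the ($\Leftarrow$) direction, the plan is to invoke the perfect bilinear pairing
\[
R_d \times S_d \longrightarrow \mbk, \qquad (\vp, g) \longmapsto \vp \circ g,
\]
which is non-degenerate in characteristic zero (dual monomial bases are $\{X^\alpha\}$ and $\{x^\alpha/\alpha!\}$). Let $V = \mathrm{span}_\mbk\{\ell_1^d,\ldots,\ell_k^d\} \subseteq S_d$. Using the key identity with $e=d$, a form $\vp \in R_d$ annihilates every $\ell_i^d$ if and only if $\vp(P_i) = 0$ for all $i$, i.e.\ if and only if $\vp \in (I_X)_d$. Hence $V^{\perp} = (I_X)_d$ under the pairing above, and by non-degeneracy $V = (I_X)_d^\perp \subset S_d$. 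The hypothesis $I_X \subset \mathrm{Ann}_R(f)$ in particular gives $(I_X)_d \subset \mathrm{Ann}_R(f)_d$, which means $f$ is orthogonal to $(I_X)_d$, so $f \in V$. Writing this inclusion explicitly produces constants $c_1,\ldots,c_k$ with $f = \sum_i c_i \ell_i^d$.

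The main obstacle is the duality step in the reverse direction: one must carefully verify that $V^{\perp}=(I_X)_d$ in the apolar pairing and then invoke non-degeneracy to conclude $V=(I_X)_d^\perp$. Everything else is either the explicit computation of $\vp \circ \ell^d$ or routine linearity; the substantive content is that apolarity is a perfect pairing between $R_d$ and $S_d$, which is the reason the combinatorial data of vanishing at points $P_i$ translates exactly into decompositions into $d$th powers of the dual linear forms $\ell_i$.
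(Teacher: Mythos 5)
Your proof is correct and is essentially the standard argument: the paper does not prove this lemma itself but defers to \cite[Lemma~1.15]{ps}, and the proof there rests on exactly the two ingredients you identify, namely the identity $\vp\circ\ell^d=\tfrac{d!}{(d-e)!}\vp(P)\,\ell^{d-e}$ and the non-degeneracy of the apolar pairing $R_d\times S_d\to\mbk$ in characteristic zero, which gives $\mathrm{span}\{\ell_1^d,\ldots,\ell_k^d\}=\bigl((I_X)_d\bigr)^\perp$. Your observation that only the degree-$d$ piece $(I_X)_d\subset\mathrm{Ann}_R(f)_d$ is needed for the reverse implication is a correct (and well-known) strengthening, and the forward direction is handled properly since $I_X$ is homogeneous.
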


From the apolarity lemma we see that the Waring rank of a form is the same as the minimum degree of a zero-dimensional radical ideal contained in its apolar ideal.  A related notion is the \textit{cactus rank} of a form; this is the minimum degree of a zero-dimensional saturated ideal contained in its apolar ideal (we will see this notion in Section~\ref{sec:6Lines}).

We will focus on forms $f\in S=\mbk[x_0,\ldots,x_n]$ which decompose as a product of (not necessarily distinct) linear forms as $f=\ell_1^{m_1}\cdots\ell_k^{m_k}$.  If $g\in S$, write $V(g)$ for the set of points in $\mbk^{n+1}$ at which $g$ vanishes.  A natural geometric object to attach to the product $f=\ell_1^{m_1}\cdots\ell_k^{m_k}$ is the \textit{multi-arrangement} $(\A,\m)$ where $\A=\cup_{i=1}^k V(\ell_i)$ is the union of the hyperplanes $V(\ell_i)\subset\mbk^{n+1}$ and $\m$ is a function which assigns to each hyperplane $H\in\A$ the integer $\m(H)$, where $\m(H)$ is the power to which the corresponding linear form appears in $f$.  We put $|\m|=\sum_H \m(H)$, which is the degree of the polynomial $f$.  If $\m(H)=1$ for all $H\in\A$ we will say $(\A,\m)$ is a \textit{simple} arrangement and write $\A$ instead of $(\A,\m)$.  Given a multi-arrangement $(\A,\m)$ we define $\Q(A,\m):=\prod_{H\in\A} \alpha_H^{\m(H)}$, where $\alpha_H$ is a choice of linear form vanishing on $H$.  If $\A$ is simple then we write $\Q(\A)$ for the product $\prod_{H\in\A} \alpha_H$.  We call $\Q(\A,\m)$ and $\Q(\A)$ the \textit{defining polynomial} of the multi-arrangement and arrangement, respectively.  Moreover we write $|\A|$ for the number of hyperplanes in $\A$, so that if $f=\Q(\A,\m)$, then $|\A|$ is the number of distinct linear factors of $f$.  For simplicity, throughout this note we will conflate a multi-arrangement or arrangement with its defining polynomial.  For instance, by ``the apolar algebra of an arrangement" we will mean the apolar algebra of its defining equation.

If $\A_1=\cup_{i=1}^s G_i\subset V\cong \mbk^{n}$ and $\A_2=\cup_{j=1}^t H_j  \subset W\cong\mbk^{m}$ are two simple arrangements, then the \textit{product} of $\A_1$ and $\A_2$ is defined by
\[
\A_1\times \A_2=\left(\cup_{i=1}^s G_i\times W \right)\cup \left(V \times \cup_{j=1}^t H_j \right)\subset V\times W
\]
If $(\A_1,\m_1)$ and $(\A_2,\m_2)$ are multi-arrangements, the product multi-arrangement $(\A_1\times\A_2,\m)$ satisfies $\m(H\times W)=\m(H)$ if $H\in\A_1$ and $\m(V\times G)=\m(G)$ if $G\in\A_2$.  Following~\cite{OT92}, we will say that a simple arrangement $\A$ is \textit{reducible} if, after a change of coordinates, $\A=\A_1\times\A_2$ for some simple arrangements $\A_1$ and $\A_2$.  Otherwise we say that $\A$ is \textit{irreducible}.

Suppose $\A\subset\mbk^n$ is a reducible arrangement and $\Q(\A)$ is its defining polynomial.  Then there is a change of variables so that $\A=\A_1\times\A_2$, where $\A_1\subset\mbk^s$ and $\A_2\subset\mbk^t$ for some positive integers $s,t$ satisfying $s+t=n$.  Put $S_1=\mbk[x_1,\ldots,x_s]$ and $S_2=\mbk[y_1,\ldots,y_t]$.  Then, under this change of variables, $\Q(\A)=\Q(\A_1)\Q(\A_2)$.  Algebraically, the defining polynomials of reducible arrangements are those which, after an appropriate change of variables, split as a product of two defining polynomials in disjoint sets of variables.

In this note we only consider hyperplane arrangements all of whose hyperplanes pass through the origin (these are called \textit{central} arrangements).  Hence we will freely pass between a central arrangement in $\mbk^{n+1}$ and its natural quotient in $\mathbb{P}^n$ -- this does not affect the algebra.

%we denote by by $I$ the annihilator of $\vp$ under the action of $R$ on $\mcd$ (respectively $S$).  That is, $I= $.  We also denote by $A_\vp$ the quotient $R/I$.  It is well-known there is a bijective correspondence (up to $\mbk^*$-multiples) between forms of degree $d$ in $\mcd_d$ and graded Artinian Gorenstein algebras of socle degree $d$ in such a way that $\vp$ corresponds to $A_{\vp}$.

\section{Products of one and two dimensional arrangements}
In this section we observe that if $(\A,\m)$ is reducible, so $(\A,\m)=(\A_1,\m_1)\times(\A_2,\m_2)$ after a change of variables, then $\ap_{f}\cong \ap_{f_1}\otimes_{\mbk}\ap_{f_2}$, where $f=\Q(\A,\m),f_1=\Q(\A_1,\m_1),$ and $f_2=\Q(\A_2,\m_2)$.  Our observation hinges on the following proposition.  We suspect this is well-known but we include a proof since we were not able to find one in the literature.

\begin{proposition}\label{prop:disjointvariables}
	Suppose $s$ and $t$ are positive integers, $f\in S_1=\mbk[x_1,\ldots,x_s]$ and $g\in S_2=\mbk[y_1,\ldots,y_t]$.  Put $S=S_1\otimes_{\mbk} S_2$.  Viewing $S$ as the polynomial ring $\mbk[x_1,\ldots,x_s,$ $y_1,\ldots,y_t]$, we abuse notation by writing $fg$ for the simple tensor $f\otimes g\in S$.  We write $R_1,R_2,$ and $R$ for the polynomial rings dual to $S_1,S_2,$ and $S$.  Then
	\begin{enumerate}
		\item $\ap_{fg}\cong (\ap_1)_f\otimes_{\mbk} (\ap_2)_g$ and
		\item $\text{Ann}_R(fg)= \text{Ann}_{R_1}(f)R_2+\text{Ann}_{R_2}(g)R_1$
	\end{enumerate}
\end{proposition}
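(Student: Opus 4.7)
The plan is to prove (2) first; part (1) will then follow by the first isomorphism theorem applied to the apolar action. Set $I := \text{Ann}_{R_1}(f)R_2 + \text{Ann}_{R_2}(g)R_1$. The easy direction $I \subseteq \text{Ann}_R(fg)$ is a one-line check from the definition of the apolar action: if $\vp \in \text{Ann}_{R_1}(f)$ then $\vp$ differentiates only the $x$-variables while $g$ involves only the $y$-variables, so $\vp \circ (fg) = (\vp \circ f)g = 0$, and symmetrically for $\text{Ann}_{R_2}(g)$.

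For the reverse inclusion I would factor the apolar action $\mu : R \to S$, $\vp \mapsto \vp \circ (fg)$, through $R/I$. Using $R = R_1 \otimes_\mbk R_2$ together with the standard fact that quotients of rings commute with tensor products over a field, we get
\[
R/I \;\cong\; (\ap_1)_f \otimes_\mbk (\ap_2)_g,
\]
and on simple tensors the induced map $\bar{\mu} : (\ap_1)_f \otimes_\mbk (\ap_2)_g \to S_1 \otimes_\mbk S_2 = S$ sends $\bar{\vp} \otimes \bar{\psi}$ to $(\vp \circ f)(\psi \circ g)$. The whole problem thus reduces to showing $\bar{\mu}$ is injective: once that is in hand, $\ker(\mu) = I$, which is (2), and (1) drops out as $\ap_{fg} = R/\ker(\mu) \cong (\ap_1)_f \otimes_\mbk (\ap_2)_g$.

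The crux is the injectivity of $\bar{\mu}$, and this is where I expect the real content of the proof to sit. My plan is to realize $\bar{\mu}$ as the tensor product over $\mbk$ of the two natural maps $(\ap_1)_f \to S_1$, $\bar{\vp} \mapsto \vp \circ f$, and $(\ap_2)_g \to S_2$, $\bar{\psi} \mapsto \psi \circ g$; each is injective by the very definition of the apolar ideal, and the tensor product of injective $\mbk$-linear maps is injective because every vector space over a field is flat. The only remaining thing that requires care is notational bookkeeping: that $\text{Ann}_{R_1}(f)R_2$ really denotes the extension of the ideal $\text{Ann}_{R_1}(f)\subseteq R_1$ to $R_1 \otimes_\mbk R_2$, which is what makes the displayed identification $R/I \cong (\ap_1)_f \otimes_\mbk (\ap_2)_g$ the standard one. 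Everything substantive in the proof is captured by this one flatness observation.
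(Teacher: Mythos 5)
Your proof is correct. The one identity you need to make the reduction work, namely that $(\vp\psi)\circ(fg)=(\vp\circ f)(\psi\circ g)$ for $\vp\in R_1$, $\psi\in R_2$ (so that $\bar{\mu}$ really is $\iota_f\otimes\iota_g$ on simple tensors, where $\iota_f:(\ap_1)_f\to S_1$ and $\iota_g:(\ap_2)_g\to S_2$ are the evident injections), is immediate since operators in the $X$-variables pass through functions of the $y$-variables, and the rest is the standard facts that $(R_1/I_1)\otimes_\mbk(R_2/I_2)\cong R/(I_1R_2+I_2R_1)$ and that a tensor product of injective $\mbk$-linear maps is injective. Your route differs from the paper's in organization rather than in substance. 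The paper proves the hard inclusion by an explicit element chase: it factors the evaluation map as $\alpha'_g\circ\alpha'_f$ with $\alpha'_f=\alpha_f\otimes\mathrm{id}_{R_2}:R_1\otimes R_2\to S_1\otimes R_2$ and $\alpha'_g=\mathrm{id}_{S_1}\otimes\alpha_g$, expands an annihilating $\vp$ in monomials, equates coefficients of each $x^\gamma$ to conclude $\alpha'_f(\vp)\in\mathrm{Ann}_{R_2}(g)\alpha_f(R_1)$, and then uses $\ker(\alpha'_f)=\mathrm{Ann}_{R_1}(f)R_2$. That kernel computation is exactly the flatness of $R_2$ over $\mbk$ that you invoke, so both arguments rest on the same principle; yours packages it into a single ``tensor of injections is injective'' statement and is cleaner and more transparent about where the field hypothesis enters, while the paper's is more self-contained and verifies the key step by hand at the level of coefficients. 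Either is acceptable; yours would arguably be the better write-up.
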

\begin{proof}
	Since $\text{Ann}_{R_1}(f)R_2+\text{Ann}_{R_2}(g)R_1$ is the kernel of the natural map from $R$ to $\ap_f\otimes \ap_g$, it is clear that (1) and (2) are equivalent.  We prove (2).
	
	Suppose that $\vp=\sum_{\alpha,\beta} c_{\alpha,\beta} X^\alpha Y^\beta \in R$, where $\alpha=(\alpha_0,\cdots,\alpha_s),\beta=(\beta_0,\ldots,\beta_t)\in \mathbb{Z}_{\ge 0}^{t+1}$, $X^\alpha=X_0^{\alpha_0}\cdots X_s^{\alpha_s}$, $Y^\beta=Y_0^{\beta_0}\cdots Y_t^{\beta_t}$, and $c_{\alpha,\beta}\in\mbk$.
	Then
	\[
	\vp\circ (fg)=\sum_{\alpha,\beta} c_{\alpha,\beta} \frac{\partial f}{\partial x^\alpha} \frac{\partial g}{\partial y^\beta}.
	\]
	Similarly, if $\varphi_1\in R_1$ and $\varphi_2\in R_2$, then $\varphi_1\varphi_2\circ fg=(\varphi_1\circ f)(\varphi_2\circ g)$.  From this observation it is clear that $\text{Ann}_{R_1}(f)R_2+\text{Ann}_{R_2}(g)R_1\subseteq \text{Ann}_R(fg)$.
	
	We prove that $\text{Ann}_{R}(fg)\subseteq \text{Ann}_{R_1}(f)R_2+\text{Ann}_{R_2}(g)R_1$.  For this we consider several maps: $\alpha_f:R_1\to S_1$ given by 
	$\varphi\to \varphi\circ f$, $\alpha_g:R_2\to S_2$ by $\varphi\to \varphi\circ g$, the tensor product maps $\alpha'_f:=\alpha_f\otimes_\mbk \mbox{id}_{R_2}:R_1\otimes_\mbk R_2\to S_1\otimes_\mbk R_2$ and $\alpha'_g := \mbox{id}_{S_1}\otimes_\mbk \alpha_g:S_1\otimes_\mbk R_2\to S_1\otimes_\mbk S_2$.  By the above observations, $\text{Ann}_R(fg)=\ker(\alpha'_g\circ\alpha'_f)$.
	
	Suppose $\vp=\sum_{\alpha,\beta} c_{\alpha,\beta} X^\alpha Y^\beta \in \text{Ann}_R(fg)$.  Then
	\begin{equation}\label{eq:zero}
	\vp\circ fg=\sum_{\alpha,\beta} c_{\alpha,\beta} \frac{\partial f}{\partial x^\alpha} \frac{\partial g}{\partial y^\beta}=0.
	\end{equation}
	Suppose the monomial $x^\gamma$ appears in $\frac{\partial f}{\partial x^\alpha}$ with coefficient $d_{\gamma,\alpha}\in\mbk$.  Equating coefficients of $x^\gamma$ in Equation~\eqref{eq:zero} yields
	\[
	x^\gamma\sum_{\alpha,\beta} d_{\gamma,\alpha}c_{\alpha,\beta} \frac{\partial g}{\partial y^\beta}=0.
	\]
	It follows that $\sum_{\alpha,\beta} d_{\gamma,\alpha} c_{\alpha,\beta} Y^\beta\in\ker(\alpha'_g)=\text{Ann}_{R_2}(g)$.  Thus
	\[
	\alpha'_f(\vp)=\sum_{\alpha,\beta} c_{\alpha,\beta} \frac{\partial f}{\partial x^\alpha} Y^\beta\in \text{Ann}_{R_2}(g)\alpha_f(R_1).
	\]
	Notice that
	\[
	\alpha'_f(\text{Ann}_{R_1}(f)R_2+\text{Ann}_{R_2}(g)R_1)=\text{Ann}_{R_2}(g)\alpha_f(R_1).
	\]
	Since $\alpha'_f(\text{Ann}_R(fg))\subseteq \text{Ann}_{R_2}(g)\alpha_f(R_1)$ and $\text{ker}(\alpha'_f) = \text{Ann}_{R_1}(f)R_2$, we have $\text{Ann}_R(fg)\subseteq \text{Ann}_{R_1}(f)R_2+\text{Ann}_{R_2}(g)R_1$, as desired.
\end{proof}

\begin{corollary}\label{cor:productMatroid}
	Suppose $S\cong S_1\otimes_\mbk \cdots \otimes_\mbk S_k$, where $S_i$ is a polynomial ring in one or two variables for $i=1,\ldots,k$.  If a form $f\in S$ factors as $f=f_1\cdots f_k$ where $f_i\in S_i$ for $i=1,\ldots,k$, then $\text{Ann}_R(f)$ is a complete intersection.
\end{corollary}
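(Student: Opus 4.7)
The plan is to induct on $k$ using Proposition~\ref{prop:disjointvariables} to reduce the statement to the following claim: if $S_i$ is a polynomial ring in one or two variables and $f_i\in S_i$ is a form, then $\text{Ann}_{R_i}(f_i)$ is a complete intersection in $R_i$. Applying Proposition~\ref{prop:disjointvariables} $(k-1)$ times, we obtain
\[
\text{Ann}_R(f) \;=\; \sum_{i=1}^{k}\text{Ann}_{R_i}(f_i)\cdot \bigotimes_{j\ne i}R_j,
\]
so $\text{Ann}_R(f)$ is generated by generators of the $\text{Ann}_{R_i}(f_i)$, viewed in disjoint sets of variables.

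The two base cases are standard. If $S_i=\mbk[x]$ has one variable, then $f_i$ is (up to scalar) a power $x^{m_i}$, and $\text{Ann}_{R_i}(f_i)=(X^{m_i+1})$ is principal in $R_i=\mbk[X]$, hence a complete intersection. If $S_i=\mbk[x,y]$ has two variables, then $R_i/\text{Ann}_{R_i}(f_i)$ is an Artinian Gorenstein quotient of a polynomial ring in two variables; by the classical structure theorem for codimension-two Gorenstein ideals (every Artinian Gorenstein algebra of embedding dimension at most two is a complete intersection; see for instance~\cite[Proposition~3.2]{ps}), $\text{Ann}_{R_i}(f_i)$ is generated by exactly two elements, forming a regular sequence in $R_i$.

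To finish, I would assemble the generators. Let $g_{i,1},\ldots,g_{i,n_i}$ be the $n_i\in\{1,2\}$ generators of $\text{Ann}_{R_i}(f_i)$ in $R_i$, so that these form a regular sequence of length $n_i=\dim R_i$. The ideal
\[
J \;=\; \bigl(\,g_{i,j}\,:\,1\le i\le k,\;1\le j\le n_i\,\bigr)\subset R
\]
has $\sum_i n_i = \dim R$ generators, and by the computation above $J=\text{Ann}_R(f)$. Because the generators in different blocks involve disjoint variables and each block is a regular sequence in its own ring, and because $R/J\cong \bigotimes_i R_i/\text{Ann}_{R_i}(f_i)$ is Artinian (so $J$ has height equal to $\dim R$), the generators of $J$ form a regular sequence. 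Hence $\text{Ann}_R(f)$ is a complete intersection.

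The only step requiring any outside input is the binary-form case, which is the classical fact that the apolar ideal of a form in two variables is a complete intersection; everything else is bookkeeping using Proposition~\ref{prop:disjointvariables} and the observation that $R/\text{Ann}_R(f)$ is Artinian.
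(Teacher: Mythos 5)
Your proposal is correct and follows essentially the same route as the paper: reduce to the one- and two-variable factors via Proposition~\ref{prop:disjointvariables} and invoke the classical fact that Artinian Gorenstein algebras in at most two variables are complete intersections. You simply spell out the induction and the assembly of the regular sequence, which the paper leaves implicit.
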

\begin{proof}
	It is well known that the apolar algebra of a homogeneous polynomial in one or two variables is a complete intersection (since Gorenstein coincides with complete intersection in one and two variables).  The corollary follows directly from this fact and Proposition~\ref{prop:disjointvariables}.
\end{proof}

\begin{remark}
	Over an algebraically closed field it is clear that the factors $f_1,\ldots,f_k$ in Corollary~\ref{cor:productMatroid} are in fact products of linear forms.
\end{remark}

\begin{remark}
	Corollary~\ref{cor:productMatroid} shows that the apolar algebra of a multi-arrangement which is a product of one and two dimensional arrangements is a complete intersection.  One may ask the reverse question: if the apolar algebra of $\Q(\A,\m)$ is a complete intersection for every choice of multiplicity $\m$, is $\A$ necessarily a product of one and two dimensional arrangements?  A similar question has an affirmative answer: in~\cite{ATY09} it is proved that if the module of multi-derivations $D(\A,\m)$ is free for every multiplicity $\m$, then $\A$ is indeed a product of one and two dimensional arrangements.
\end{remark}

\section{Generic arrangements}

In this section we derive a lower bound on the initial degree of the apolar ideal of a \textit{generic} arrangement $\A\subset\mathbb{P}^n$ with at least $n+1$ hyperplanes (Proposition~\ref{prop:genericlowdeg}).  All arrangements in this section are simple arrangements.

\begin{definition}\label{def:generic}
An arrangement in $\mathbb{P}^n$ is generic if the intersection of any $k$ of its hyperplanes has codimension $\min\{k,n+1\}$.	
\end{definition}

%We then derive two consequences of Proposition~\ref{prop:genericlowdeg} which motivate the computations in Section~\ref{sec:6Lines}.  
In preparation we give several lemmas and definitions.  Given a form $G\in R$, the \textit{gradient of $G$} is the vector $\nabla G := \left(\frac{\partial G}{\partial X_0},\ldots, \frac{\partial G}{\partial X_n}\right)$.

\begin{lemma}\label{lem3} 
	Suppose $g\in S$ is a homogeneous polynomial and write $f=\ell g$ for some linear form $\ell$.  Let $F\in R$ be homogeneous of degree $d\geq 1$.  Then, if we abuse notation and write $\ell$ for the corresponding linear form in $R$, we have 
	\[
	F\circ f = (\nabla F\cdot \nabla \ell)\circ g + \ell\left(F\circ g\right).
	\]
	(Here $\nabla F\cdot \nabla \ell$ denotes the \textit{dot product}.)  In particular, if $f=\ell_1\ell_2\cdots\ell_t$ is a product of $t\ge n$ linear forms, $n$ of which are linearly independent, then there is an $\ell\in \stl \ell_1,\ldots,\ell_t\str$ such that $\nabla F\cdot \nabla \ell$ is nonzero.
\end{lemma}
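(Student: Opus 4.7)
The plan is to handle the two claims separately, treating the main identity first and then using it to set up the ``in particular'' consequence.

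For the identity $F\circ f = (\nabla F\cdot \nabla \ell)\circ g + \ell(F\circ g)$, I would induct on $d = \deg F$. The base case $d=1$ is the ordinary Leibniz rule: a linear differential operator is a derivation, so $F\circ(\ell g) = (F\circ \ell)g + \ell(F\circ g)$; when $F$ is linear the scalar $F\circ\ell$ equals $\nabla F\cdot \nabla\ell$ (both gradients being constant vectors in $\mbk^{n+1}$), and multiplying by this scalar agrees with the apolar action $(\nabla F\cdot \nabla\ell)\circ g$. For the inductive step, factor $F = X_iF'$ with $\deg F' = d-1$, so $F\circ f = \partial_{x_i}(F'\circ f)$. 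Apply the inductive hypothesis to $F'\circ f$, differentiate the two resulting summands via the usual product rule, and then rearrange using the identity $\nabla F\cdot \nabla\ell = (\partial\ell/\partial x_i)\,F' + X_i(\nabla F'\cdot \nabla\ell)$, which follows from $\partial F/\partial X_j = \delta_{ij}F' + X_i\,\partial F'/\partial X_j$. The three derivative terms then recombine exactly into $(\nabla F\cdot \nabla\ell)\circ g + \ell(F\circ g)$.

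For the ``in particular'' claim I would argue by contradiction. Write $\ell_i = \sum_j a_{ij}X_j$, set $F_j := \partial F/\partial X_j$, and suppose $\nabla F\cdot \nabla \ell_i = \sum_j a_{ij}F_j = 0$ for every $i$. These are polynomial identities in $R$, so decomposing each $F_j$ monomial by monomial, the coefficient vector $(f_{0,\alpha},\ldots,f_{n,\alpha})\in \mbk^{n+1}$ of each $F_j$ lies in the kernel of the matrix $A = (a_{ij})$. The linear-independence hypothesis on the $\nabla\ell_i$ forces $A$ to have maximal rank, so its kernel is trivial; hence $F_j \equiv 0$ for each $j$, contradicting $\deg F \ge 1$.

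The main obstacle I expect is the bookkeeping in the inductive step of the first identity: one must verify that the Leibniz expansion of $\partial_{x_i}((\nabla F'\cdot \nabla\ell)\circ g + \ell(F'\circ g))$ regroups precisely into the claimed two-term form, with no stray coefficients left over. The second part is then essentially a rank computation, with one subtle point to double-check: the linear-independence hypothesis on the $\ell_i$ must be strong enough that the kernel of $A$ is trivial (otherwise one only concludes $\nabla F = P\cdot(c_0,\ldots,c_n)$ for a single polynomial $P$, which by equality of mixed partials would force $F$ to be a power of a single linear form $L = \sum c_jX_j$ annihilated by every $\nabla\ell_i$). Assuming the hypothesis is read so that the $\nabla\ell_i$ span the full space of linear forms, this degenerate possibility is ruled out and the contradiction is immediate.
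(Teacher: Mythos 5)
Your argument is correct, and it takes a somewhat different (and in one place more careful) route than the paper. For the identity, the paper simply computes $F\circ(\ell g)$ directly when $F$ is a monomial $X_{i_1}^{d_1}\cdots X_{i_t}^{d_t}$, observes that the first-order cross terms assemble into $(\nabla F\cdot\nabla\ell)\circ g$, and extends to general $F$ by linearity; your induction on $d$ via the Leibniz rule proves the same thing, with the bookkeeping absorbed into the identity $\nabla(X_iF')\cdot\nabla\ell=(\partial\ell/\partial x_i)F'+X_i(\nabla F'\cdot\nabla\ell)$, and the recombination does close up as you claim. The one thing to state explicitly is that the factorization $F=X_iF'$ only makes sense for monomials, so the induction must be preceded by the (harmless) reduction to monomials using that both sides are $\mbk$-linear in $F$ --- the same reduction the paper uses. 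For the ``in particular'' clause the paper offers only ``the rest is clear,'' so your rank argument is a genuine addition; moreover, you have correctly put your finger on a real imprecision in the statement. Since $R=\mbk[X_0,\ldots,X_n]$ has $n+1$ variables, the hypothesis that only $n$ of the $\ell_i$ are linearly independent is not sufficient: taking $\ell_i=x_{i-1}$ for $i=1,\ldots,n$ and $F=X_n^d$ gives $\nabla F\cdot\nabla\ell_i=0$ for every $i$, realizing exactly the degenerate case $F=cL^d$ you describe. The conclusion needs the gradients $\nabla\ell_i$ to span $\mbk^{n+1}$, i.e.\ $n+1$ of the forms to be linearly independent; that stronger hypothesis is precisely what is available in the lemma's only application (the proof of Proposition~\ref{prop:genericlowdeg} invokes it for an arrangement ``defined by more than $n$ linearly independent linear forms''), and under it your kernel computation finishes the argument immediately.
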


\begin{proof} 
	
	Write $\ell = a_0x_0 + \cdots + a_nx_n$.  First, let $F$  be a monomial of degree $d$, say $F = X_{i_1}^{d_1}\cdots X_{i_t}^{d_t}$, where $d_1,\ldots, d_t$ are positive.  Then it is easy to see that $F\circ f$ is given by 
	
	\begin{multline*}\left(\sum_{j=1}^t d_ja_jX_{i_1}^{d_1}\cdots X_{i_j}^{d_j-1}\cdots X_{i_t}^{d_t}\right)\circ g + \ell(F\circ g)=
	\\
	(\nabla F\cdot \nabla \ell)\circ g+\ell(F\circ g)\tag{$\star$}
	\end{multline*}
	
	By linearity of the gradient, $(\star)$ holds for arbitrary polynomials $F$.  The rest is clear.
\end{proof}

%In this section, we consider some geometric considerations that sometimes allow one to conclude that $\mbox{Ann}_R(f)_k=0$.  That is, there are no differential operators of degree $k$ that annihilate $f$.

\begin{definition}
	If $f$ is a form, the $k$\textit{th order Jacobian} of $f$ is the ideal generated by all partials of $f$ of order $k$ and is denoted by $J^k(f)$.
\end{definition}

\begin{remark}\label{rem:sing}
	The Jacobian of $f$ is $J^1(f)$; geometrically, $V(J^1(f))$ is the singular locus of $f$.  Analogously, $V(J^k(f))$ is the set of singular points with multiplicity at least $k+1$.
\end{remark}

\begin{remark}
	Since we assume $f$ is homogeneous, the Euler identity $\sum x_i\frac{d g}{d x_i}=\deg(g)\cdot g$ applied repeatedly to $f$ and its partials yields the containments $(f) \subset J^1(f)\subset J^2(f)\subset\cdots\subset J^k(f)$.  Geometrically, this yields a nested sequence of subvarieties of the hypersurface $V(f)$ ordered according to the severity of the singularities.
\end{remark}

\begin{remark}
	If $f$ is a form of degree $d$, the degree $k$ component of the apolar algebra $(\ap_f)_k$, is isomorphic (as a vector space over $\mathbb K$) to $J^{d-k}(f)_k$ via apolarity.  Hence $\mbox{Ann}_R(f)_k=0$ if and only if $J^{d-k}(f)$ is the $k$th power of the maximal ideal.
\end{remark}

According to Remark~\ref{rem:sing}, if $f$ is a product of linear forms, then $V(J^k(f))$ is exactly those points which lie at the intersection of at least $k+1$ of the hyperplanes defined by the linear forms whose product is $f$.  Now we arrive at the crucial point: if $f=\Q(\A)$ for a \textit{generic} arrangement, $V(J^k(f))$ is precisely the union of all codimension $k+1$ intersections of hyperplanes from $\A$.  Thus $V(J^k(f))$ is a \textit{star configuration}~\cite{GHG13}; a star configuration is by definition the union of all codimension $c$ intersections of a generic arrangement (in~\cite[Definition~2.1]{GHG13} the property of \textit{meeting properly} is exactly what we mean by a \textit{generic} arrangement).  In~\cite{GHG13} it is shown that the ideal of codimension $c$ intersections of an arrangement of $|\A|$ hyperplanes is generated by all distinct products of $|\A|-c+1$ of the linear forms defining $\A$.

\begin{lemma}\label{lem:Colon}
	Suppose $f$ decomposes non-trivially as a product $f=gh$; write $I=\mbox{Ann}_R(h)$ and $I'=\mbox{Ann}_R(f)=\mbox{Ann}_R(gh)$.  If $D\in I'_{k}\setminus I_{k}$, then $g\in J^{k-1}(h):(D\circ h)$.
\end{lemma}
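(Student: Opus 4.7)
The plan is to exploit a Leibniz-style product rule for the apolar action. Since $D\in I'_k$, we have $D\circ(gh)=0$, and the key idea is to expand this equation and isolate the ``pure'' term $g\cdot(D\circ h)$ from the remaining terms, which we will then recognize as elements of $J^{k-1}(h)$.

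To make this precise, I would write $D=\sum_\alpha c_\alpha X^\alpha$ with $|\alpha|=k$ and apply the multivariable Leibniz rule for partial differentiation:
\[
X^\alpha\circ(gh) \;=\; \sum_{\beta\le\alpha}\binom{\alpha}{\beta}(X^\beta\circ g)\,(X^{\alpha-\beta}\circ h).
\]
Summing over $\alpha$ and grouping by $i=|\beta|$, the $i=0$ contribution is exactly $g\cdot(D\circ h)$. Every other contribution is a sum of products $(X^\beta\circ g)(X^{\alpha-\beta}\circ h)$ with $|\alpha-\beta|=k-i<k$, so each involves a partial of $h$ of order at most $k-1$. By the Euler-identity chain $(h)\subset J^1(h)\subset\cdots\subset J^{k-1}(h)$ recorded in the excerpt, every such partial lies in $J^{k-1}(h)$, and hence so does the whole term. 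Collecting everything, the identity $D\circ(gh)=0$ yields
\[
g\cdot(D\circ h) \;\in\; J^{k-1}(h),
\]
and since $D\notin I_k$ forces $D\circ h\ne 0$, this is precisely the assertion $g\in J^{k-1}(h):(D\circ h)$.

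The main bookkeeping obstacle is checking that the Leibniz expansion really does single out the term $g\cdot(D\circ h)$ and that every other term has an $h$-factor which is a partial of order strictly less than $k$; the Euler iteration remark is what allows all those lower-order partials (including $h$ itself, which appears in the $i=k$ term) to be absorbed uniformly into the single ideal $J^{k-1}(h)$. Once the product rule is written down carefully, the conclusion is immediate.
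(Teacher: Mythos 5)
Your proposal is correct and follows the same route as the paper: the paper's proof simply states that ``repeatedly using the product rule yields $D\circ gh = g(D\circ h)+T$ with $T\in J^{k-1}(h)$,'' and your explicit multivariable Leibniz expansion, together with the Euler-identity containments $(h)\subset J^1(h)\subset\cdots\subset J^{k-1}(h)$, is exactly the verification of that claim. The only cosmetic point is that the final step needs no appeal to $D\circ h\neq 0$, since $g\in J^{k-1}(h):(D\circ h)$ is by definition the statement $g\cdot(D\circ h)\in J^{k-1}(h)$.
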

\begin{proof}
	Repeatedly using the product rule yields that $D\circ gh=g(D\circ h)+T$, where $T\in J^{k-1}(h)$.  Since $D\circ gh = 0$, this gives the result.  
\end{proof}

\begin{corollary}\label{cor:PropagationGenericDegrees}
	Suppose $f$ is a product of at least $n+2$ distinct linear forms defining a generic arrangement $\mathcal{A}$ in $\mathbb{P}^n$.  Factor $f$ as a product $f=gh$ so that $\deg(h)\geq n+1$.  Write $I=\mbox{Ann}_R(h)$ and $I'=\mbox{Ann}_R(f)=\mbox{Ann}_R(gh)$.  If $I_{k}=0$ for any $k\le n$ then $I'_k=0$.
\end{corollary}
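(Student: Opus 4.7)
The plan is proof by contradiction. Assume $I'_k \neq 0$ and pick a nonzero $D \in I'_k$; since $I_k = 0$ by hypothesis, $D \notin I$, so $D\circ h$ is a nonzero form of degree $\deg(h) - k$. Applying Lemma~\ref{lem:Colon} to the factorization $f = gh$ gives $g \cdot (D\circ h) \in J^{k-1}(h)$, which translates on the variety side to
\[
V(J^{k-1}(h)) \;\subseteq\; V(g) \cup V(D\circ h).
\]
From here the task is to force $D\circ h$ to vanish on enough of $V(J^{k-1}(h))$ to conclude $D\circ h = 0$.

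Let $\mathcal{B} \subseteq \A$ be the subarrangement whose defining polynomial is $h$. Combining Remark~\ref{rem:sing} with the discussion preceding the statement, $V(J^{k-1}(h))$ is the star configuration given by the union of all codimension $k$ intersections $L_I = \bigcap_{\ell \in I} V(\ell)$ as $I$ ranges over $k$-subsets of $\mathcal{B}$. The next step is to observe that each $L_I$ is an irreducible linear subspace of codimension exactly $k$ (using $k \leq n$), and that $L_I \not\subseteq V(g)$: if $L_I$ were contained in $V(\ell')$ for some linear factor $\ell'$ of $g$, then $k + 1 \leq n + 1$ hyperplanes of $\A$ would meet in codimension only $k$, contradicting the genericity of $\A$. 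Irreducibility of $L_I$ then forces $L_I \subseteq V(D \circ h)$ for every such $I$.

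Consequently $D \circ h$ vanishes on the entire star configuration $\bigcup_I L_I$. By the theorem of Geramita--Harbourne--Migliore~\cite{GHG13} recalled just before the statement, the (radical) ideal of this star configuration is generated by products of $\deg(h) - k + 1$ distinct linear forms from $\mathcal{B}$, so it contains no nonzero form of degree less than $\deg(h) - k + 1$. Since $\deg(D\circ h) = \deg(h) - k$, this yields $D \circ h = 0$, the desired contradiction. The main subtlety is the middle ``peeling'' step: it is the genericity of the whole arrangement $\A$, and not merely that of $\mathcal{B}$, that allows the $V(g)$ component to be discarded on each $L_I$, while the hypothesis $k \leq n$ is used precisely to ensure that the relevant intersections are positive-dimensional and hence irreducible.
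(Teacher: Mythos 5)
Your proof is correct and follows essentially the same route as the paper's: both apply Lemma~\ref{lem:Colon}, use genericity of $\A$ to rule out the factors of $g$ vanishing on any component of $V(J^{k-1}(h))$, and then invoke the Geramita--Harbourne--Migliore generation degree of the star configuration ideal to force $D\circ h=0$; you merely phrase in terms of components and containments of varieties what the paper phrases via colon ideals and minimal primes. (One cosmetic slip: when $k=n$ the spaces $L_I$ are points, so ``positive-dimensional'' should read ``nonempty,'' which is all the irreducibility argument needs.)
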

\begin{proof}
	
	Suppose to the contrary that $D\in I'_k$ and $D\neq 0$.  By Lemma~\ref{lem:Colon}, $g\in J^{k-1}(h):(D\circ h)$.  Write $h=\ell_1\ell_2\cdots\ell_t$, where $t\geq n+1$; then $V(J^{k-1}(h))$ is the union of linear spaces which are the intersections of at least $k$ of the hyperplanes $V(\ell_1),\cdots,V(\ell_t)$.  This is nonempty since $k\le n<t$.  As $\mathcal{A}$ is a generic arrangement, none of the factors of $g$ vanish along any component of $V(J^{k-1}(h))$; in other words $g$ is not in any prime ideal that comprises the intersection that is the radical of $J^{k-1}(h)$.  This means that $g\in J^{k-1}(h):(D\circ h)$ only if $D\circ h$ is in every minimal prime of $J^{k-1}(h)$.  In other words, $D\circ h$ is in the radical of $J^{k-1}(h)$.  Let $K=\sqrt{J^{k-1}(h)}$; this is the ideal of the union of linear spaces which are the intersections of $k$ of the hyperplanes $V(\ell_1),\cdots,V(\ell_t)$.  As previously noted, this is a star configuration, and by ~\cite[Proposition~2.9]{GHG13}, $K$ is generated by all possible products of $t-k+1$ of the linear forms $\ell_1,\ldots,\ell_t$.  On the other hand $D\circ h$ has degree $t-k$, so $D\circ h\notin K$.  With this contradiction, we must have $I'_k=0$.
\end{proof}

\begin{remark}
	Consider the $A_3$ arrangement in $\mathbb{P}^2$, defined by $f=xyz(x-y)(x-z)(y-z)$.  Write $f=gh$ with $g=y-z$ and $h=xyz(x-y)(x-z)$.  Set $I'=\mbox{Ann}_R(f)$ and $I=\mbox{Ann}_R(h)$.  Then $I_2=0$ but $I'_2\neq 0$.  Thus the hypothesis that $\mathcal{A}$ is generic in Corollary~\ref{cor:PropagationGenericDegrees} is necessary.
\end{remark}

Now we give the main result of this section -- a bound on the initial degree of the apolar ideal of a generic arrangement.  For an ideal $I\subset R$ we will denote by $\alpha(I)$ its initial degree, that is, the smallest degree $d$ for which $I_d\neq 0$.

%Noting again that if $\mathcal A$ is a generic arrangement with defining polynomial $f$, our goal is to give a lower bound for $|\mathcal A|$ if $\mbox{Ann}_R(f)$ is a complete intersection.  To accomplish this goal, we study the minimal degree of a generator of the ideal $\mbox{Ann}_R(f)$.  For any homogeneous ideal $I$ of $R$, we denote the minimum degree of a generator of $I$ by $\alpha(I)$.  

\begin{proposition}\label{prop:genericlowdeg}
	Suppose $\A$ is a generic arrangement of at least $n+1$ hyperplanes in $\mathbb{P}^n$ and $f=\Q(\A)$.  Then $\alpha(\mbox{Ann}_R(f))\ge \min\{|\A|-n+1,n+1\}$.
\end{proposition}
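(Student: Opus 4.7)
The plan is to induct on $t := |\A|$. For the base case $t = n+1$, a generic arrangement of $n+1$ hyperplanes in $\mathbb{P}^n$ can be sent to the coordinate arrangement by a linear change of variables; up to scalar this makes $f = x_0 x_1 \cdots x_n$, whose apolar ideal is the well-known $(X_0^2, \ldots, X_n^2)$, of initial degree $2 = \min\{(n+1)-n+1,\, n+1\}$.

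For the inductive step $t \ge n+2$, I would pick any linear factor $\ell$ of $f$ and write $f = \ell h$, so that $h = \Q(\A')$ for $\A'$ a generic arrangement of $t-1 \ge n+1$ hyperplanes. By induction, $\alpha(\text{Ann}_R(h)) \ge \min\{t-n,\, n+1\}$; applying Corollary~\ref{cor:PropagationGenericDegrees} in each degree $k < \min\{t-n,\, n+1\}$ (so $k \le n$, as required) propagates this vanishing to $f$, giving $\alpha(\text{Ann}_R(f)) \ge \min\{t-n,\, n+1\}$. When $t \ge 2n+1$ this already equals $n+1$, matching the desired bound.

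The remaining case is $n+2 \le t \le 2n$, where I must bump the bound up by one and show the extra vanishing $\text{Ann}_R(f)_{t-n} = 0$. Let $D \in \text{Ann}_R(f)_{t-n}$. Since $k = t-n \le n$, the colon-ideal argument inside the proof of Corollary~\ref{cor:PropagationGenericDegrees} still applies: if $D \notin \text{Ann}_R(h)$, then Lemma~\ref{lem:Colon} combined with the genericity of $\ell$ forces $D \circ h$ into $\sqrt{J^{t-n-1}(h)}$, an ideal generated in degree $n$ by~\cite[Prop.~2.9]{GHG13}, contradicting $\deg(D \circ h) = n-1$. Hence $D \in \text{Ann}_R(h)$. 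Then Lemma~\ref{lem3} applied to $f = \ell h$ collapses $D \circ f = 0$ to $(\nabla D \cdot \nabla \ell) \circ h = 0$, placing $\nabla D \cdot \nabla \ell$ into $\text{Ann}_R(h)_{t-n-1}$, which equals $0$ by the inductive bound. Running this for every linear factor $\ell$ of $f$ and invoking the second part of Lemma~\ref{lem3} (the $t \ge n+1$ linear factors span $R_1$ by genericity, so no nonzero positive-degree $D$ can have $\nabla D \cdot \nabla \ell = 0$ for all such $\ell$) forces $D = 0$.

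The main obstacle is exactly this final degree bump: Corollary~\ref{cor:PropagationGenericDegrees} alone only transports $\alpha(\text{Ann}_R(h))$ to $\alpha(\text{Ann}_R(f))$ without improvement, so the extra $+1$ has to be extracted by combining the colon argument with Lemma~\ref{lem3} across every linear factor of $f$ simultaneously, leveraging the spanning property of a generic arrangement.
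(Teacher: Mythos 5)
Your proposal is correct and takes essentially the same approach as the paper: induction on $|\A|$ with the coordinate-hyperplane base case, Corollary~\ref{cor:PropagationGenericDegrees} to handle $|\A|>2n$, and, for $n+2\le|\A|\le 2n$, the same combination of the product-rule identity of Lemma~\ref{lem3}, the colon/genericity argument of Lemma~\ref{lem:Colon}, and the fact from~\cite{GHG13} that the relevant star configuration ideal is generated in degree $n$. The only difference is organizational: the paper fixes a single factor $\ell$ with $\nabla\ell\cdot\nabla D\neq 0$ and derives a contradiction, while you show $D\circ h=0$ and hence $\nabla D\cdot\nabla\ell=0$ for \emph{every} factor $\ell$ and then conclude $D=0$ from the spanning property of a generic arrangement---a contrapositive rearrangement of the same argument.
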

\begin{proof}
	We first prove by induction on $|\A|$ that if $n+1\le|\A|\le 2n$, then $\alpha(\mbox{Ann}_R(f))\ge |\A|-n+1$.  If $|\A|=n+1$ then without loss of generality, $f=x_0x_1\cdots x_n$ and $\mbox{Ann}_R(f)=(x_0^2,\ldots,x_n^2)$, so $\alpha(\mbox{Ann}_R(f))=2=|\A|-n+1$.
	
	Suppose now that $n+1 <|\A| \leq 2n$, and additionally suppose for a contradiction that there is some $D\in \mbox{Ann}_R(f)_{|\A|-n}$.  Since $\A$ is defined by more than $n$ linearly independent linear forms, by Lemma~\ref{lem3} there is some $\ell\in\mathcal A$ so that $\nabla\ell\cdot\nabla D\neq 0$.  Writing $f=g\ell$, with $\deg(g) = n$, and using Lemma~\ref{lem3} again, we have 
	\[
	0 = D\circ f=(\nabla\ell\cdot \nabla D)\circ g+\ell(D\circ g).
	\]
	Suppose $D\circ g=0$, so that $(\nabla\ell\cdot \nabla D)\circ g=0$.  Now $\deg(\nabla\ell\cdot\nabla D)=|\A|-n-1$, and by induction $\alpha(\mbox{Ann}_R(g))\ge |\A|-1-n+1=|\A|-n$.  With this contradiction, $D\circ g \neq 0$.
	
	With the above, $\ell(D\circ g))=-(\nabla\ell\cdot \nabla D)\circ g$, so $\ell(D\circ g)\in J^{|\A|-n-1}(g)$.  Write $K = \sqrt{J^{|\A|-n-1}(g)}$, so that $K$ is the ideal defining all possible intersections of $|\A|-n$ hyperplanes of $g$; by~\cite{GHG13}, $\alpha(K)=(|\A|-1)-(|\A|-n)+1=n$.  Since $\deg(D\circ g)=(|\A|-1)-(|\A|-n)=n-1$, $D\circ g\notin K$.  Since $K$ is radical, $\ell$ must be in at least one minimal prime of $K$.  This would imply that $V(\ell)$ passes through a codimension $|\A|-n$ intersection of $\A$.  As $|\A|\le 2n$, $K$ is not the homogeneous maximal ideal, so that this contradicts that $\A$ is a generic arrangement.  Hence no such $D$ can exist, and it follows that $\alpha(\mbox{Ann}_R(f))\ge |\A|-n+1$.
	
	If $|\A|\ge 2n$ we prove by induction on $|\A|$ that $\alpha(\mbox{Ann}_R(f))\ge n+1$.  The base case $|\A|=2n$ has already been shown.  If $|\A|>2n$ then the result follows from Corollary~\ref{cor:PropagationGenericDegrees}.
\end{proof}

\begin{corollary}\label{cor:genericci}
If $\A$ is a generic arrangement of at least $n+2$ hyperplanes in $\mathbb{P}^n$ whose apolar ideal is a complete intersection, then $|\A|\ge n(n+1)$.
\end{corollary}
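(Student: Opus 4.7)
The plan is to combine Proposition~\ref{prop:genericlowdeg} with the standard socle-degree formula for Artinian complete intersections. Since $R=\mbk[X_0,\ldots,X_n]$ has $n+1$ variables, an Artinian complete intersection $\text{Ann}_R(f)$ must be minimally generated by exactly $n+1$ homogeneous forms, say of degrees $d_0\le d_1\le\cdots\le d_n$. A standard fact (e.g.\ via the Koszul complex) says that $\ap_f = R/\text{Ann}_R(f)$ is then Gorenstein of socle degree $\sum_{i=0}^n(d_i-1)=\sum_i d_i - (n+1)$. On the other hand, $\ap_f$ has socle degree $\deg(f)=|\A|$. So I would start from the identity
\[
|\A| \;=\; \sum_{i=0}^n d_i \;-\; (n+1).
\]

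Next, since each $d_i\ge \alpha:=\alpha(\text{Ann}_R(f))$, the above identity gives the inequality $|\A|\ge (n+1)\alpha - (n+1) = (n+1)(\alpha-1)$. Now I would invoke Proposition~\ref{prop:genericlowdeg}, which provides $\alpha \ge \min\{|\A|-n+1,\,n+1\}$, and split into two cases.

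In the first case, suppose $|\A|\ge 2n$. Then $\alpha\ge n+1$, hence
\[
|\A| \;\ge\; (n+1)(\alpha-1)\;\ge\;(n+1)\cdot n \;=\; n(n+1),
\]
which is the desired bound. In the second case, suppose instead $n+2\le |\A|\le 2n-1$. Then $\alpha \ge |\A|-n+1$, so
\[
|\A| \;\ge\; (n+1)(|\A|-n+1-1) \;=\; (n+1)(|\A|-n).
\]
Rearranging, $n(n+1)\ge n|\A|$, i.e.\ $|\A|\le n+1$, contradicting $|\A|\ge n+2$. Thus this case is vacuous, and the conclusion $|\A|\ge n(n+1)$ follows.

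There is no real obstacle here: the argument is just Proposition~\ref{prop:genericlowdeg} combined with the socle-degree formula. The only point to take care of is making explicit that a complete intersection in $R$ which is also Artinian must have $n+1$ generators (so that both the socle-degree formula and the multiplication by $n+1$ in the lower bound are correct), and checking that the small-$|\A|$ case really is ruled out by the hypothesis $|\A|\ge n+2$ rather than a weaker bound.
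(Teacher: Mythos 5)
Your proof is correct and follows essentially the same route as the paper: both use the socle-degree identity $\sum_i(d_i-1)=|\A|$ for the Artinian complete intersection together with the lower bound on $\alpha(\mathrm{Ann}_R(f))$ from Proposition~\ref{prop:genericlowdeg}, splitting into the cases $|\A|\ge 2n$ and $n+2\le|\A|\le 2n-1$ (the paper phrases the latter as the lower bound on $d_0$ exceeding the upper bound $(|\A|+n+1)/(n+1)$, which is algebraically the same contradiction you derive). No changes needed.
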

\begin{proof}
	Put $f=\Q(\A)$.  If $\mbox{Ann}_R(f)$ is a complete intersection generated in degrees $d_0\le \ldots \le d_n$, then $(d_0-1)+(d_1-1)+\cdots+(d_n-1)=|\A|$, so $d_0+\cdots+d_n=|\A|+n+1$.  With this notation, $\alpha(\mbox{Ann}_R(f))=d_0$, and this gives $ d_0 \leq (|\A|+n+1)/(n+1)$.
	
	It is straightforward to check that if $n+1<|\A|\le 2n$ then the lower bound for $\alpha(\mbox{Ann}_R(f))$ from Proposition~\ref{prop:genericlowdeg} is strictly larger than $(|\A|+n+1)/(n+1)$, so $\mbox{Ann}_R(f)$ cannot be a complete intersection.
	
	If $|\A|>2n$ then we obtain from Proposition~\ref{prop:genericlowdeg} that $n+1\le (|\A|+n+1)/(n+1)$ or equivalently $n(n+1)\le |\A|$, proving the corollary.
\end{proof}

%Recall that the \textit{Waring rank} of a form $f\in S$ is the smallest integer $r$ for which there exist linear forms $\ell_1,\ldots,\ell_r$ so that $f=\ell_1^d+\cdots+\ell_r^d$.

\begin{corollary}\label{cor:genericWaring}
	The Waring rank of a generic arrangement $\A\subset\mathbb{P}^n$ with at least $n+1$ hyperplanes is at least $\min\{\binom{|\A|}{n},\binom{2n}{n}\}$.
\end{corollary}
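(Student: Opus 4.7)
The plan is to combine the Apolarity Lemma with the initial degree bound of Proposition~\ref{prop:genericlowdeg} and a standard Hilbert function estimate for sets of points. Writing $f=\Q(\A)$, Lemma~\ref{lem:Apolarity} tells us that the Waring rank of $f$ equals the minimum cardinality of a set $X=\{P_1,\ldots,P_k\}\subset\mathbb{P}^n$ for which $I_X\subseteq \mbox{Ann}_R(f)$. So the task reduces to lower bounding $|X|$ for any such $X$.

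For the second step, I would observe that the inclusion $I_X\subseteq \mbox{Ann}_R(f)$ forces $\alpha(I_X)\ge \alpha(\mbox{Ann}_R(f))$, and Proposition~\ref{prop:genericlowdeg} then yields $\alpha(I_X)\ge d:=\min\{|\A|-n+1,\,n+1\}$. In other words, no nonzero form of degree less than $d$ can vanish on all of $X$.

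The final step is the Hilbert function argument: consider the evaluation map
\[
\mbox{ev}_X\colon R_{d-1}\longrightarrow \mbk^{|X|},\qquad F\mapsto (F(P_1),\ldots,F(P_{|X|})).
\]
Its kernel is $(I_X)_{d-1}=0$ by the previous step, so $\mbox{ev}_X$ is injective and $|X|\ge \dim_{\mbk} R_{d-1}=\binom{d-1+n}{n}$. Substituting the two possible values of $d$ yields $\binom{|\A|}{n}$ (when $d=|\A|-n+1$) and $\binom{2n}{n}$ (when $d=n+1$), and taking the minimum gives the claimed bound. There is no real obstacle here since Proposition~\ref{prop:genericlowdeg} has already done the heavy lifting; the only point to verify is the binomial arithmetic, namely $\binom{(|\A|-n+1)-1+n}{n}=\binom{|\A|}{n}$ and $\binom{(n+1)-1+n}{n}=\binom{2n}{n}$.
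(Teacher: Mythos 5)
Your proposal is correct and follows essentially the same route as the paper: apply Proposition~\ref{prop:genericlowdeg} to get the initial degree bound, use the Apolarity Lemma to force $I_X\subseteq \mathrm{Ann}_R(f)$ for the points dual to a Waring decomposition, and conclude via the injectivity of evaluation in degree $\alpha(\mathrm{Ann}_R(f))-1$ that $|X|\ge\binom{n+d-1}{n}$. The binomial arithmetic also checks out, so there is nothing to add.
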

\begin{proof}
	Put $f=\Q(\A)$.  By Proposition~\ref{prop:genericlowdeg}, $\alpha(\mbox{Ann}_R(f))\geq \min\{|\A|-n+1,n+1\}$.  Suppose $f=\sum_{i=1}^k \ell^{|\A|}_i$, and let $X=\{P_i\}_{i=1}^k$ be the dual points in $\mathbb{P}^n$ found by stripping off the coordinates of the linear forms $\ell_i$.  By Lemma~\ref{lem:Apolarity}, $I_X\subset \mbox{Ann}_R(f)$.  For this to happen, $X$ must impose independent conditions on forms of degree $d=\alpha(\mbox{Ann}_R(f))-1$.  In other words, $X$ must consist of at least as many points as the dimension of the vector space $S_d$, where $S=k[x_0,\ldots,x_n]$.  Since $\dim S_d=\binom{n+d}{n}$, this gives the result.  
\end{proof}

\begin{remark}
	As Corollary~\ref{cor:genericWaring} does not account for the degree of $\Q(\A)$, we suspect that Corollary~\ref{cor:genericWaring} is not optimal.  However we will see in Section~\ref{sec:6Lines} that, even if $\A$ is generic, $\Q(\A)$ can be annihilated by many forms of unexpectedly low degree.
\end{remark}

\section{Six lines in $\mathbb{P}^2$}\label{sec:6Lines}
In this section we give a computational case study of irreducible multi-arrangements in $\mathbb{P}^2(\mathbb{C})$ with six lines, counting multiplicity.  Our motivation for this case study comes from~\cite[Example~III.3.2]{W06}, where Wakefield observes that the determinant of the catalecticant matrix (defined below) is not enough to show that the apolar algebra of a generic arrangement of six lines in $\mathbb{P}^2(\mathbb{C})$ is not a complete intersection.  As a consequence of our case study, we can say with reasonable certainty that there are indeed no generic arrangements of six lines in $\mathbb{P}^2$ whose apolar algebra is a complete intersection.  Another motivation for this case study is that, according to Corollary~\ref{cor:genericci}, a generic line arrangement must have at least six lines in order for its apolar algebra to have the possibility of being a complete intersection.

By Proposition~\ref{prop:genericlowdeg}, a generic arrangement $\A$ of six lines cannot be annihilated by any quadrics.  It follows that if the apolar ideal of $\A$ is a complete intersection then it must be generated by a regular sequence of three cubics.  In the process of looking for generic arrangements with this property, computations in the computer algebra systems \textsc{Bertini} and \textsc{Macaulay2} led us to the following (computational) result.  We have no theoretical justification for this and have not used software such as \textsc{alphaCertified} for \textsc{Bertini} to give a theoretical guarantee that the computations are correct -- hence we will denote it as a Theorem*.

\begin{theorem}[*]\label{thm:sixlines}
	Suppose that $(\A,\m)$ is an irreducible multi-arrangement in $\mathbb{P}^2(\mathbb{C})$ and $|\m|=6$.  Put $f=\Q(\A,\m)$.  Suppose that $f$ satisfies either:
	\begin{enumerate}
		\item $\dim \text{Ann}_R(f)_3 \ge 3$
		\item $f$ has cactus rank at most $7$
	\end{enumerate}
	Then, up to a change in coordinates, $f$ is one of the following six polynomials:
	\begin{itemize}
		\item $f_1=xyz(x+y+z)(x+\alpha y+\bar{\alpha}z)(x+\bar{\alpha}y+\alpha z)$
		\item $f_2=xyz(x+y+z)(x+\e y+\om z)(x+\be y+ \bom z)$
		\item $f_3=xyz(x+y+z)(x+\bom y+\om z)(x+\be y+\e z)$
		\item $f_4=xyz(x+y+z)(x+\om y+\bom z)(x+\e y+ \be z)$
		\item $f_5=xyz(x+y+z)(x+y)(y+z)$
		\item $f_6=x^3yz(x+y+z)$,
	\end{itemize}
	where $\alpha=\exp(\frac{2\pi i}{3}), \om=\exp(\frac{\pi i}{3}), \eta=\frac{1}{\sqrt{3}}\exp(\frac{\pi i}{6})$, and the bar denotes complex conjugation.
	In fact, $\dim \text{Ann}_R(f_i)_3=4$ and the cactus rank of $f_i$ is $6$ for $1\le i\le 6$.  If instead we require that $f$ has Waring rank six, then $f$ must be one of $f_1,f_2,f_3,f_4,$ or $f_5$.
\end{theorem}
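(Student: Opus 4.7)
The plan is to reduce the classification to a computational search governed by the middle catalecticant, and then verify the rank properties of each of the six candidate forms directly.

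\textbf{Reducing (2) to (1).} First I would show that condition (2) implies condition (1). If $f$ has cactus rank at most $7$, there is a saturated zero-dimensional ideal $I\subseteq \text{Ann}_R(f)$ of degree at most $7$. The Hilbert function $k\mapsto \dim(R/I)_k$ of a saturated zero-dimensional subscheme of $\mathbb{P}^2$ is non-decreasing and is bounded above by the degree of the scheme. In particular $\dim(R/I)_3\le 7$, so $\dim I_3 \ge \dim R_3 - 7 = 3$, whence $\dim \text{Ann}_R(f)_3\ge 3$. Thus it suffices to classify irreducible multi-arrangements satisfying (1), and then to determine for each whether condition (2) is achieved and what the Waring rank is.

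\textbf{Setting up the search.} Consider the apolarity pairing $\mu_3\colon R_3\to S_3$ given by $\vp\mapsto \vp\circ f$. Since $\dim R_3=\dim S_3=10$, the map $\mu_3$ is encoded by a $10\times 10$ \emph{catalecticant matrix} $\text{Cat}_3(f)$ whose entries are linear in the coefficients of $f$, and $\ker(\mu_3)=\text{Ann}_R(f)_3$. Condition (1) becomes $\text{rank}(\text{Cat}_3(f))\le 7$, cut out by the vanishing of all $8\times 8$ minors. I would parametrize $f=\ell_1\cdots\ell_6$ by the six linear forms (allowing coincidences to capture multi-arrangements) and use the eight-dimensional $\text{PGL}_3$ action to fix four of the six points in general position, say $[1{:}0{:}0]$, $[0{:}1{:}0]$, $[0{:}0{:}1]$, $[1{:}1{:}1]$, leaving a four-dimensional parameter space on which the catalecticant rank conditions cut out an algebraic set.

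\textbf{Homotopy continuation and matching.} I would then feed the resulting polynomial system into \textsc{Bertini} and compute a numerical irreducible decomposition, extracting the components on which $\text{rank}(\text{Cat}_3(f))\le 7$. After discarding extraneous solutions (reducible multi-arrangements, configurations where the catalecticant drops rank spuriously because two of the free points collide with a fixed one, and solutions in the same $S_6$-orbit of re-labelings of the linear factors), I would group the remaining solutions modulo the residual stabilizer of the normalization. I expect exactly six equivalence classes, and I would match each to one of $f_1,\ldots,f_6$ by computing normal forms and combinatorial invariants of the underlying (multi-)arrangement (multiplicity sequence, intersection lattice, symmetry group).

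\textbf{Verification of the six forms.} For each $f_i$ I would compute $\text{Ann}_R(f_i)$ symbolically in \textsc{Macaulay2} and read off $\dim \text{Ann}_R(f_i)_3 = 4$ from the Hilbert function. To certify that the cactus rank equals $6$, I would exhibit an explicit saturated degree-$6$ ideal inside $\text{Ann}_R(f_i)$ and rule out degree at most $5$ using Proposition~\ref{prop:genericlowdeg} combined with a direct inspection of the initial degree of $\text{Ann}_R(f_i)$ (the cactus rank is bounded below by the Hilbert function of $R_f$, which forces a short-ideal lower bound). For the Waring rank: for $1\le i\le 5$ I would locate six distinct points $P_1,\ldots,P_6\in\mathbb{P}^2$ with $I_{\{P_j\}}\subset \text{Ann}_R(f_i)$ and invoke the Apolarity Lemma to produce a Waring decomposition of length $6$; combined with the cactus lower bound, this gives Waring rank $6$. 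For $f_6=x^3yz(x+y+z)$ I would analyze the pencils of cubics in $\text{Ann}_R(f_6)_3$ and show that every length-$6$ saturated subideal is non-reduced, typically because the point dual to the tripled factor $x^3$ forces embedded structure, so that no length-$6$ radical subideal exists and the Waring rank exceeds $6$.

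\textbf{Main obstacle.} The principal difficulty is certifying completeness of the numerical search: one must argue that homotopy continuation has tracked every component of the solution variety, that no arrangement satisfying (1) has been missed, and that two clusters of numerical solutions really are (or are not) related by the residual coordinate change. This is precisely why the paper records the result as a Theorem*; a fully rigorous proof would require symbolic verification of candidate normal forms, degree bounds (for instance by Bezout or mixed-volume estimates on the rank-drop variety) to confirm the correct number of components, and a tool such as \textsc{alphaCertified} to validate the isolated solutions produced by \textsc{Bertini}.
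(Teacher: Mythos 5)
Your proposal follows essentially the same strategy as the paper: normalize coordinates using irreducibility so that four of the factors become $x,y,z,x+y+z$, impose $\mathrm{rank}(\mbox{Cat}_f(3))\le 7$ on the remaining two linear factors, solve numerically with \textsc{Bertini}, post-process modulo the residual symmetry, and then verify the six candidates symbolically via the Apolarity Lemma and the lower bound $\dim(\ap_{f})_3=6$ on the cactus and Waring ranks. Two points of comparison are worth recording. First, where you propose to cut out the rank-drop locus by the vanishing of the $8\times 8$ minors (correctly identified --- the paper's prose says ``seven by seven'' but its computation does impose rank $\le 7$), the paper explicitly rejects the minors formulation as computationally infeasible and instead uses the rank-deficiency parametrization of Bates--Hauenstein--Peterson--Sommese: an auxiliary $10\times 3$ matrix $B$ with an identity block, the condition $\mbox{Cat}_f(3)B=0$, and a squared-up system of $25$ random combinations of the resulting $30$ cubic equations in $\mathbb{P}^2\times\mathbb{P}^2\times\mathbb{C}^{21}$. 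This is the step that makes the computation tractable, so your plan as written would likely stall at the homotopy stage. Second, your explicit reduction of condition (2) to condition (1) via the non-decreasing Hilbert function of a saturated zero-dimensional scheme is a genuine addition: the paper leaves this implicit and only ever searches on condition (1). One small caution: Proposition~\ref{prop:genericlowdeg} cannot be invoked to bound the cactus rank of $f_5$ or $f_6$ from below, since those arrangements are not generic; the correct and uniform lower bound is the one you also mention, namely $\dim(\ap_{f_i})_3=6$. Your closing paragraph correctly identifies why the result carries a star: completeness of the numerical solution set is uncertified, which is exactly the caveat the authors make.
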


Before we discuss the simplifications and further computations leading to this result, we make some remarks about the polynomials listed in Theorem~\ref{thm:sixlines}.
\begin{itemize}
	\item The forms $f_1,f_2,f_3,$ and $f_4$ each define generic arrangements.
	\item After changing coordinates, $f_5$ is the defining polynomial of the $A_3$ braid arrangement.
	\item The product $f_1$ is exactly half of the well-known Hessian arrangement (see~\cite[Example~6.30]{OT92}).  
	\item For each of $i=1,\ldots,6$, $\text{Ann}_R(f_i)$ has four cubics (these are all listed in Table~\ref{tbl:6LinesS}).  In particular, Proposition~\ref{prop:genericlowdeg} is tight for these.
	\item For each of $i=1,\ldots,6$, the ideal $J_i$ generated by the elements of degree at most $3$ in $\text{Ann}_R(f_i)$ is the ideal of a zero-dimensional scheme of degree six in $\mathbb{P}^2$.  Except for $i=6$, the ideal $J_i$ is the ideal of six reduced points in $\mathbb{P}^2$.  These points are listed in Table~\ref{tbl:Waring}.  Via the apolarity lemma (Lemma~\ref{lem:Apolarity}), the ideals $J_i$ ($1\le i\le 5$) yield an explicit Waring decomposition for $f_i$, listed in Table~\ref{tbl:Waring}.  In Table~\ref{tbl:Waring}, the point $p_i$ is dual to the form $\ell_i$.
	\item It is known that the Waring (and cactus) rank of a form $f$ is at least as large as $\dim (\ap_f)_k$ for any $k$; since $\dim (\ap_{f_i})_k$ is maximal when $k=3$ and $\dim (\ap_{f_i})_3=6$ for each of $i=1,\ldots,6$, the minimum value the Waring (respectively, cactus) rank can be is $6$.  Thus the Waring rank of $f_1,\ldots,f_5$ is six.  For $f_1,\ldots,f_4$, this is the lower bound predicted by Corollary~\ref{cor:genericWaring}.
\end{itemize}

%In Table~\ref{tbl:6LinesS} we trim the table in Table~\ref{tbl:6Lines} by noticing that the arrangements $2,3,4$ are equivalent via an appropriate permutation of the variables $x,y,z$ (which also fixes the earlier part of the arrangement - $xyz(x+y+z)$).  In the same way the conjugate pairs $5,6$, $7,8$, and $9,10$ are all equivalent via a permutation; so only one of the conjugate pairs is listed in Table~\ref{tbl:6LinesS}.  Furthermore, $11,12,$ and $13$ are equivalent (these are all the $A_3$ braid arrangement up to change of coordinates).  Clearly $14,15,$ and $16$ are all equivalent via a permutation.  Moreover, $17$ is equivalent to $14$ via the the linear transformation defined by $T(x)=x+y+z$, $T(y)=-y$, and $T(z)=-z$.  This leaves the six arrangements of Table~\ref{tbl:6LinesS}.

\begin{table}
	
	$\alpha=\exp(\frac{2\pi i}{3}),\om=\exp(\frac{\pi i}{3}),\eta=\frac{1}{\sqrt{3}}\exp(\frac{\pi i}{6})$
	
	\
	
	\renewcommand{\arraystretch}{2}
	
	\begin{tabular}{c|c}
		& \parbox[t][][t]{5.6 cm}{Annihilating cubics of $f_i$} \\[5 pt]
		\hline
		$f_1$ & \parbox[t][][t]{9 cm}{$X^3-Y^3,X^3-Z^3,XY^2+YZ^2+ZX^2, \\ X^2Y+Y^2Z+Z^2X$} \\[12 pt]
		\hline
		$f_2$  & \parbox[t][][t]{9 cm}{$X^2Z-XZ^2,3Y^2Z-3YZ^2+Z^3, X^3-3X^2Y+3XY^2, X^2Y-3XY^2+3Y^3+2XYZ-XZ^2-2YZ^2+Z^3$}\\[12 pt]
		\hline
		$f_3$ & 
		\parbox[t][][t]{9 cm}{$-6\e XY^2+6\e Y^3+6\e XZ^2+3\e YZ^2-6\e Z^3+X^3+2XY^2-3Y^3+2XYZ
			-4XZ^2-3YZ^2+3Z^3, -3\e XY^2+3 \e Y^3+X^2Y-Y^3,
			3\e XZ^2-3\e Z^3+X^2Z-3XZ^2+2Z^3, 3\e YZ^2+Y^2Z-2YZ^2$}\\[35 pt]
		\hline
		$f_4$ & 
		\parbox[t][][t]{9 cm}{
			$-6\be XY^2+6\be Y^3+6\be XZ^2+3\be YZ^2-6\be Z^3+X^3+2XY^2-3Y^3+2XYZ
			-4XZ^2-3YZ^2+3Z^3, -3\be XY^2+3 \be Y^3+X^2Y-Y^3,
			3\be XZ^2-3\be Z^3+X^2Z-3XZ^2+2Z^3, 3\be YZ^2+Y^2Z-2YZ^2$}
		\\[35 pt]
		\hline
		$f_5$ & \parbox[t][][t]{9 cm}{$X^2-XY+Y^2-YZ+Z^2, Y^3-2Y^2Z+2YZ^2, Z^4$ (generators for ideal)} \\[15 pt]
		\hline
		$f_6$ & \parbox[t][][t]{9 cm}{$Z^3, Y^2Z-YZ^2, Y^3, XY^2-XYZ+XZ^2+2YZ^2$}\\[5 pt]
	\end{tabular}
	
	\
	
	\caption{Annihilating cubics of forms $f_i$ in Theorem~\ref{thm:sixlines}}\label{tbl:6LinesS}
\end{table}

%We now consider the Waring rank and Waring decomposition of the six forms in Table~\ref{tbl:6LinesS}; these display a remarkable amount of symmetry.  According to the apolarity lemma~\ref{lem:Apolarity}, the Waring rank of a form is the minimal degree of a zero-dimensional radical ideal contained in the apolar ideal of the form.  For each of the first five forms indicated in Table~\ref{tbl:6LinesS}, the annihilating cubics define a radical ideal of six points, meaning each of these has Waring rank six.  For the sixth form, the four cubics generate a saturated ideal of degree 6 which is not radical -- this means the sixth form has \textit{cactus} rank equal to six.  (The cactus rank of a form is the minimal degree of an ideal of a zero-dimensional scheme contained in the apolar ideal.)  We have listed explicit Waring decompositions for the first five forms in Table~\ref{tbl:Waring}.

\begin{table}
	$\alpha=\exp(\frac{2\pi i}{3}),\beta=1+i,\om=\exp(\frac{\pi i}{3}),\eta=\frac{1}{\sqrt{3}}\exp(\frac{\pi i}{6})$
	
	\renewcommand{\arraystretch}{1.5}
	\setlength{\tabcolsep}{2pt}
	\begin{tabular}{c|c|c}
		Form & Dual Points & Waring Decomposition\\
		\hline
		$f_1$ & \parbox[t][][t]{2.3 cm}{$p_1=[\alpha:1:1]\\ p_2=[\bar{\alpha}:1:1]\\ p_3=[1:\alpha:1]\\ p_4=[1:\bar{\alpha}:1]\\ p_5=[1:1:\alpha]\\ p_6=[1:1:\bar{\alpha}]$} & $\frac{2\alpha+1}{270}(-\ell_1^6+\ell_2^6-\ell_3^6+\ell_4^6-\ell_5^6+\ell_6^6)$\\[55 pt]
		\hline
		$f_2$ & \parbox[t][][t]{2.3 cm}{$p_1=[1:\e:1]\\ p_2=[1:\be:1]\\ p_3=[0:\e:1]\\ p_4=[0:\be:1]\\ p_5=[1:\e:0]\\ p_6=[1:\be:0]$} & $\frac{2\e-1}{10}(-\ell_1^6+\ell_2^6+\ell_3^6-\ell_4^6+\ell_5^6-\ell_6^6)$\\[55 pt]
		\hline
		$f_3$ & \parbox[t][][t]{2.3 cm}{$p_1=[\om:1:\om]\\ p_2=[1:1:\om]\\ p_3=[\om:1:0]\\ p_4=[1:1:0]\\ p_5=[1:0:\om]\\ p_6=[1:0:1]$} & $\frac{2\om-1}{90}(\ell_1^6-\ell_2^6-\ell_3^6+\ell_4^6+\ell_5^6-\ell_6^6)$\\[55 pt]
		\hline
		$f_4$ & \parbox[t][][t]{2.3 cm}{$p_1=[\bom:1:\bom]\\ p_2=[1:1:\bom]\\ p_3=[\bom:1:0]\\ p_4=[1:1:0]\\ p_5=[1:0:\bom]\\ p_6=[1:0:1]$} & $\frac{2\bom-1}{90}(\ell_1^6-\ell_2^6-\ell_3^6+\ell_4^6+\ell_5^6-\ell_6^6)$\\[55 pt]
		\hline
		$f_5$ & \parbox[t][][t]{2.3 cm}{$p_1=[\beta:2:\bar{\beta}]\\ p_2=[\bar{\beta}:2:\beta]\\ p_3=[\beta:2:\beta]\\ p_4=[\bar{\beta}:2:\bar{\beta}]\\ p_5=[1:0:i]\\ p_6=[1:0:\bar{i}]$} & \raisebox{-5 pt}{$\dfrac{\ell_1^6+\ell_2^6-\ell_3^6-\ell_4^6-8i\ell_5^6-8i\ell_6^6}{1920}$}\\[55 pt]
	\end{tabular}
	
	\
	
	\caption{Waring decompositions of the forms $f_i$ in Theorem~\ref{thm:sixlines}.  The points $p_i$ give the coefficients of the linear forms $\ell_i$.}\label{tbl:Waring}
\end{table}

Now we explain the computations that led us to Theorem~\ref{thm:sixlines}.  We first reduce the number of variables needed.

\begin{lemma}
	If $\A$ is an irreducible arrangement in $\mathbb{P}^2$ then we can change variables so that $f=\Q(\A)$ has the form $f=xyz(x+y+z)\ell_1\ell_2\cdots\ell_t$, where $\ell_1,\ldots,\ell_t$ are linear forms.
\end{lemma}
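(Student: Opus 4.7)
The plan is to convert irreducibility into a combinatorial condition on how many lines miss each point, extract four lines of $\A$ with no three concurrent, and then bring them to the standard configuration by a projective change of coordinates.

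\textbf{Step 1 (Unpacking irreducibility).} Any nontrivial direct sum $\mbk^3 = V \oplus W$ in our setting has $\{\dim V, \dim W\} = \{1,2\}$; take $V$ to be the line. A product $\A = \A_1 \times \A_2$ then consists of all planes containing $V$ (those coming from hyperplanes of $\A_2 \subset W$) together with possibly the single plane $W$ itself (present exactly when $\A_1$ is nonempty, since the only hyperplane of the line $V$ is the origin). Dually in $\mathbb{P}^2$, this says $\A$ is reducible if and only if there is a point $p$ such that every line of $\A$, \emph{except possibly one}, passes through $p$. Hence $\A$ is irreducible precisely when for every point $p \in \mathbb{P}^2$ at least two lines of $\A$ avoid $p$; in particular $|\A| \geq 4$.

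\textbf{Step 2 (Four lines with no three concurrent).} When $|\A| = 4$, the irreducibility criterion forces every point to have at most $|\A| - 2 = 2$ lines through it, so no three of the four lines concur, and we are done. For $|\A| \geq 5$, I proceed by a swap. Pick four lines $\ell_1,\ell_2,\ell_3,\ell_4 \in \A$; if no three are concurrent, we are done. Otherwise, after relabelling, $\ell_1,\ell_2,\ell_3$ meet at a point $p$. Since at least two lines of $\A$ avoid $p$, I may replace $\ell_4$ by such a line so that $\ell_4 \not\ni p$, and then choose a second line $\ell_5 \in \A$ with $\ell_5 \neq \ell_4$ and $\ell_5 \not\ni p$. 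Let $q = \ell_4 \cap \ell_5$; then $q \neq p$. Because the line through $p$ and $q$ is unique, at most one of $\ell_1, \ell_2, \ell_3$ can pass through $q$; discard that one (if any) and keep the remaining two, say $\ell_i, \ell_j$. The four lines $\{\ell_i, \ell_j, \ell_4, \ell_5\}$ have no three concurrent: each offending triple would either force one of $\ell_4, \ell_5$ to pass through $p$, or would force one of $\ell_i, \ell_j$ to pass through $q$, both of which are ruled out by construction.

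\textbf{Step 3 (Change of coordinates).} The four lines $\{x=0,\ y=0,\ z=0,\ x+y+z=0\}$ also have no three concurrent. Since $PGL_3(\mbk)$ acts transitively on ordered quadruples of lines in $\mathbb{P}^2$ with no three concurrent (dually, on projective frames: ordered quadruples of points with no three collinear), there is a linear change of variables sending the four lines from Step 2 to $\{x=0,\ y=0,\ z=0,\ x+y+z=0\}$. In the new coordinates $f = \Q(\A)$ factors as $xyz(x+y+z)\ell_1 \cdots \ell_t$, with $t = |\A|-4$ and each $\ell_i$ a linear form for one of the remaining lines of $\A$.

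The main obstacle is the swap in Step 2: one must leverage the purely combinatorial condition ``at least two lines miss every point'' to rearrange an arbitrary quadruple into one in general linear position. Step 1 is bookkeeping about the product construction for arrangements, and Step 3 is classical projective geometry.
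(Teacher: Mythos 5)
Your proof is correct, and while it shares the paper's overall strategy---locate four hyperplanes of $\A$ no three of whose defining forms are linearly dependent, then normalize them to $x,y,z,x+y+z$ by a projective change of coordinates---the way you produce that quadruple is genuinely different. The paper first normalizes three independent factors to $x,y,z$ and then argues by contradiction: if every quadruple of factors contained a dependent triple, each remaining factor would be a binary form in two of the coordinates, and either they all involve the same pair of variables (forcing reducibility) or two of them involve different pairs, in which case those two together with two coordinate forms already give a good quadruple. You instead begin by translating reducibility in $\mathbb{P}^2$ into the explicit dual statement that $\A$ is a near-pencil (all lines but at most one through a common point), so that irreducibility becomes the clean combinatorial condition ``every point misses at least two lines of $\A$''; your swap argument in Step~2 then upgrades an arbitrary quadruple to one in general position. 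Your route buys a reusable, coordinate-free characterization of reducible line arrangements and a purely synthetic argument; the paper's route is shorter to set up because it piggybacks on the coordinate normalization it needs anyway, but it leaves the reducibility criterion implicit. One small wording caveat: in your Step~1 the product arrangement consists of \emph{those} planes containing $V$ that come from $\A_2$ (not all of them), though your parenthetical makes clear you know this and it does not affect the ``if and only if'' you actually use.
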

\begin{proof}
	If $\A$ is irreducible then $f=\Q(\A)$ must have three factors which are linearly independent (otherwise $\A$ will decompose as a product of a one or two dimensional arrangement with the `empty' arrangement).  Furthermore $f$ must have at least four factors since otherwise it will decompose as a product of three one-dimensional arrangements.
	
	Changing variables, we may assume that $f$ has the form $f=xyz\ell_0\cdots\ell_t$ ($t\ge 0$).  We claim that $f$ has a collection of four factors no three of which are linearly dependent.  Suppose for a contradiction that every collection of four factors of $f$ has a subset of three factors which are linearly dependent.  Applying this supposition to the collection $\{x,y,z,\ell_i\}$ yields that one of the subsets $\{x,y,\ell_i\}$, $\{x,z,\ell_i\}$, or $\{y,z,\ell_i\}$ is linearly dependent.  Hence $\ell_i$ must be a linear form in only two variables for $i=0,\ldots,t$.  If each $\ell_i$ ($i=0,\ldots,t$) is a function of the \textit{same} two variables, the arrangement clearly decomposes as a product.  Hence we may assume without loss that $\ell_1=x+\alpha y$ and $\ell_2=x+\beta z$, where $\alpha,\beta\neq 0$.  But then $y,z,x+\alpha y,x+\beta z$ forms a collection of four factors of $f$ no three of which are linearly independent, proving the claim.
	
	Since $f$ has a collection of four factors no three of which are linearly independent, we can change variables to make three of these factors $x,y,$ and $z$.  The fourth factor must involve all three variables, hence we can apply scaling in the $x,y$ and $z$ directions to normalize the coefficients of the fourth factor to one.  Thus $f$ can be written in the form $f=xyz(x+y+z)\ell_1\cdots\ell_t$.
\end{proof}

\begin{corollary}\label{cor:varchange}
	If $(\A,\m)$ is an irreducible multi- arrangement in $\mathbb{P}^2$ with six lines, then there is a change of variables so that $\Q(\A,\m)=xyz(x+y+z)\ell_1\ell_2$, with $\ell_1$ and $\ell_2$ linear forms.
\end{corollary}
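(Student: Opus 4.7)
The plan is to derive the corollary as an immediate consequence of the preceding lemma, once we pass from the multi-arrangement $(\A,\m)$ to the underlying simple arrangement $\A$. The core observation is that the lemma already provides the desired normal form $x,y,z,x+y+z$ for the defining polynomial of any irreducible simple arrangement in $\mathbb{P}^2$; the only work left is to track multiplicities.

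First I would verify that the underlying simple arrangement $\A$ inherits irreducibility from $(\A,\m)$. Indeed, any factorization $\A=\A_1\times\A_2$ of the underlying simple arrangement induces a factorization $(\A,\m)=(\A_1,\m_1)\times(\A_2,\m_2)$ of the multi-arrangement, where $\m_1$ and $\m_2$ are the restrictions of $\m$ to $\A_1$ and $\A_2$ respectively; equivalently, the defining polynomial $\Q(\A,\m)$ factors in disjoint variables exactly when $\Q(\A)$ does. Hence the hypothesis that $(\A,\m)$ is irreducible forces $\A$ to be irreducible, and the preceding lemma applies to $\A$.

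Next I would invoke the lemma to find a linear change of coordinates under which $\Q(\A)=xyz(x+y+z)\ell_1'\cdots\ell_t'$ for some $t\ge 0$ and linear forms $\ell_i'$. The same change of coordinates transforms $\Q(\A,\m)$ into
\[
\Q(\A,\m)=x^{a_1}y^{a_2}z^{a_3}(x+y+z)^{a_4}(\ell_1')^{b_1}\cdots(\ell_t')^{b_t},
\]
with $a_i\ge 1$, $b_j\ge 1$, and $a_1+a_2+a_3+a_4+\sum_j b_j=|\m|=6$. Dividing out one copy each of $x,y,z,x+y+z$ leaves a product of linear forms of total degree $6-4=2$, which can be written as $\ell_1\ell_2$ (with $\ell_1,\ell_2$ drawn, with repetition, from $\{x,y,z,x+y+z,\ell_1',\ldots,\ell_t'\}$). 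This gives the claimed form $\Q(\A,\m)=xyz(x+y+z)\ell_1\ell_2$.

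No step presents a genuine obstacle; the only subtlety is the first one, where one must be careful that the paper's definition of \emph{reducible}, although stated for simple arrangements, transports correctly to multi-arrangements via the product construction recalled in Section 2. Once this compatibility is noted, the corollary reduces to bookkeeping of multiplicities on top of the preceding lemma.
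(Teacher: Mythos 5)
Your proof is correct and follows exactly the route the paper intends: the paper states this corollary without proof as an immediate consequence of the preceding lemma, and your argument (irreducibility of $(\A,\m)$ passes to the underlying simple arrangement via the product construction, apply the lemma, then count degrees to see that $6-4=2$ linear factors remain) is precisely that implicit reasoning. Your explicit attention to why irreducibility transports from the multi-arrangement to its underlying simple arrangement is a point the paper glosses over, and it is handled correctly.
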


%As usual, we set $R=\mbk[X_0,\ldots,X_n]$ and $S=\mbk[x_0,\ldots,x_n]$.

\begin{definition}\label{def:Catalecticant}
	Let $f\in S$ be a form of degree $d$ and $0\le t\le d$ an integer.  The map $\mbox{Cat}_f(t):R_t\to S_{d-t}$ defined by $\vp\to \vp\circ f$ is the \textit{catelecticant map}.  Choosing the usual basis of monomials for $R_t$ and $S_{d-t}$, we obtain the corresponding \textit{catalecticant matrix}.  Abusing notation, we will refer to this matrix also as $\mbox{Cat}_f(t)$.  The rows of $\mbox{Cat}_f(t)$ correspond to monomials in the basis of $S_{d-t}$, and the columns of $\mbox{Cat}_f(t)$ correspond to monomials in the basis of $R_t$.  Suppose $X^\alpha$ is a monomial in $R_d$ and $x^\beta$ is a monomial in $S_{d-t}$.  The entry of $\mbox{Cat}_f(t)$ in the row corresponding to $X^\alpha$ and column corresponding to $x^\beta$ is the coefficient of the monomial $x^\beta$ in $\frac{\partial{f}}{\partial x^\alpha}$. It is straightforward to see that $\ker(\mbox{Cat}_f(t))$ is $\text{Ann}_R(f)_t$.
\end{definition}

We return now to the computation at hand.  By Corollary~\ref{cor:varchange} we make a change of variables so that $f=xyz(x+y+z)\ell_1\ell_2$.  Introducing symbolic constants $a,b,c,d,e,$ and $f$ we can write $f=xyz(x+y+z)(ax+by+cz)(dx+ey+fz)$.  Now consider the condition in Theorem~\ref{thm:sixlines} that $\dim \text{Ann}_R(f)_3\ge 3$.  Here $R=\mbk[X,Y,Z]$ and $S=\mbk[x,y,z]$.  Using Definition~\ref{def:Catalecticant}, we see that $\text{Ann}_R(f)_3=\ker \mbox{Cat}_f(3):R_3\to S_3$.  Evidently $\mbox{Cat}_f(3)$ is a ten by ten matrix with entries of bi-degree $(1,1)$ in the variables $a,b,c$ and $d,e,f$; this matrix is shown in~\cite[Example~III.3.2]{W06}.  To say $\dim \text{Ann}_R(f)_3\ge 3$ is equivalent to imposing that $\mbox{rank}(\mbox{Cat}_f(3))\le 7$.  Thus the forms from Theorem~\ref{thm:sixlines} can be found as the zero locus of the seven by seven minors of this matrix.  As one may imagine, this approach is computationally infeasible.

To impose the rank condition we use an idea from~\cite{bates2009numerical} which reduces computation by introducing many auxiliary variables.  Explicitly, we introduce a ten by three matrix $B$ whose first three rows form a three by three identity matrix and whose remaining entries are filled with new variables:
\[
B=\begin{bmatrix}
1 & 0 & 0\\
0 & 1 & 0\\
0 & 0 & 1\\
A & B & C\\
D & E & F\\
G & H & V\\
J & K & L\\
M & N & O\\
P & Q & R\\
S & T & U\\
\end{bmatrix}.
\]
We then impose the condition $\mbox{Cat}_f(3)B=0$; this guarantees that $\mbox{Cat}_f(3)$ will have rank at most $7$.  This yields $30$ equations of total degree three in the $27$ variables $a,b,c,d,e,f,$ $A,\ldots,V$ (we replace the variable $I$ with $V$ since this is reserved for the imaginary unit in \textsc{Bertini}).  Since we only look for solutions up to constant multiple in the variable groups $a,b,c$ and $d,e,f$, we seek solutions in the $25$ dimensional space $\mathbb{P}^2\times\mathbb{P}^2\times \mathbb{C}^{21}$.  In \textsc{Bertini} we can specify this by using the option for homogeneous variable groups.  However we still must square the system, which we do by taking $25$ random linear combinations of the $30$ equations resulting from $\mbox{Cat}_f(3)B=0$.  The system of $25$ equations can now be solved by \textsc{Bertini} (on a personal laptop this computation is likely to take days -- we performed this computation on a local cluster).  We post-process the output by projecting onto the coordinates corresponding to $a,b,c,d,e,f$, removing duplicates, and then removing solutions that correspond to permuting the variables (a permutation fixes the first four factors of $f$ but acts non-trivially on the latter two factors).  This yields the list in Theorem~\ref{thm:sixlines}.  The scripts in \textsc{Macaulay2},~\textsc{Bertini}, and \textsc{Sage} which we used to find the forms in Theorem~\ref{thm:sixlines} and verify their properties may be found under the Research tab at~\url{https://midipasq.github.io/}.

\section{Conclusions and further questions}
There are two main results of this paper.  The first is a bound on the initial degree of the apolar ideal of a generic arrangement, attained using defining equations of star configurations from~\cite{GHG13}.  From this we obtained a necessary condition on the size of a generic arrangement with a complete intersection apolar algebra, as well as a lower bound on the Waring rank of a generic arrangement.  A subsequent question raised by Wakefield~\cite{W06} remains wide open -- is the apolar algebra of a generic arrangement ever a complete intersection?  To this we add two additional questions concerning the optimality of Proposition~\ref{prop:genericlowdeg} and Corollary~\ref{cor:genericWaring}.  First, are there arbitrarily large generic arrangements in $\mathbb{P}^n$ whose apolar ideals have initial degree $n+1$?  Second, are there arbitrarily large generic arrangements in $\mathbb{P}^n$ whose Waring rank is $\binom{2n}{n}$?

The second main result of this paper is the use of apolar algebras and numerical algebraic geometry to determine the irreducible multi-arrangements with six lines in $\mathbb P^2$ with minimal Waring rank.  We determined that, up to a change of coordinates, there are six irreducible multi-arrangements  that have cactus rank equal to six, five of which also have Waring rank equal to six.  These results are summarized  in  Theorem 5.1 (*). The (*) indicates that this is a ``numerically established theorem" and thus falls short of being a rigorously proved theorem. While one can check that each of these forms has the claimed Waring decomposition, one can't be certain that there do not exist further examples without further work. Thus, an obvious extension of this paper, that needs to be  carried  out, would be to either provide an alternate approach to establish that these are  the only such forms that have this property or else utilize software such as \textsc{alphaCertified} for \textsc{Bertini} to give a theoretical guarantee that the computations are correct. At present, the way we have chosen to make the computations is too expensive to carry out using  \textsc{alphaCertified} for \textsc{Bertini} on the system that we used. 

The general problem of determining the degree $d$ irreducible multi-arrangements in $\mathbb P^n$ that have minimal Waring rank (and minimal cactus rank) is currently out of  reach but  we leave it  as a suggestion for  a further path of research. It is worth  noting that each of the extremal  examples we found has interesting combinatorial properties.  In particular, after a change of coordinates, one is the  defining ideal of the $A_3$ braid arrangement. Another is half  of the Hessian arrangement. Perhaps there is a clue in the structure of these examples  that can help one search for higher degree extremal examples.  One promising avenue is to look for extremal behavior among the simplicial line arrangements catalogued by Grunbaum~\cite{Grun09}; such arrangements have recently led to interesting examples for the \textit{containment problem} between regular and symbolic powers~\cite{SM17}.
For now, we leave this as an open problem for the interested reader.

\section{Acknowledgements}
We thank Tanner Strunk for running our \textsc{Bertini} script on a local CSU computer cluster.  We would also like to thank Max Wakefield for pointing out to us that $f_1$ in Table \ref{tbl:6LinesS} is half of the Hessian arrangement.

%\bibliography{genbib}
%\bibliographystyle{plain}

\end{document}